\tikzstyle{nodes1} = [circle, rounded corners, minimum width=1cm, minimum height=1cm,text centered, draw=black, fill=white!30]
\tikzstyle{arrow} = [thick,->,>=stealth]
\newcommandx{\unsure}[2][1=]{\todo[linecolor=red,backgroundcolor=red!25,bordercolor=red,#1]{#2}}
\newcommandx{\change}[2][1=]{\todo[linecolor=blue,backgroundcolor=blue!25,bordercolor=blue,#1]{#2}}
\newcommandx{\info}[2][1=]{\todo[linecolor=OliveGreen,backgroundcolor=OliveGreen!25,bordercolor=OliveGreen,#1]{#2}}
\newcommandx{\improvement}[2][1=]{\todo[linecolor=Plum,backgroundcolor=Plum!25,bordercolor=Plum,#1]{#2}}
\newcommandx{\thiswillnotshow}[2][1=]{\todo[disable,#1]{#2}}
\newcommand{\lmd}{\lambda}
\newcommand{\pt}{\partial}
\newcommand{\re}{\mathbb{R}}
\newcommand{\st}{\mathit{s.t.}}
\newcommand{\ddd}{,\ldots,}
\newcommand{\be}{\begin{equation}}
\newcommand{\ee}{\end{equation}}
\newcommand{\deq}{\,\coloneqq\,}
\theoremstyle{plain}
\newcommand{\mc}[1]{\mathcal{#1}}
\newcommand{\highlight}[1]{\textcolor{black}{\bf{#1}}}
\def\deg{{\mathrm{deg}}}
\def\bN{{\mathbb N}}
\def\hm{{\hat{m}}}
\def\bI{{\bf{I}}}
\def\bG{{G}}
\def\ei{e}
\def\R{{\mathbb{R}}}
\def\Ideal{{\operatorname{Ideal}}}
\def\Qmod{{\operatorname{Qmod}}}
\def\IQ{{\operatorname{IQ}}}
\def\cC{{\mathcal{C}}}
\def\cA{{\mathcal{A}}}
\def\cP{{\mathcal{P}}}
\def\cR{{\mathcal{R}}}
\def\cQ{{\mathcal{Q}}}
\def\cD{{\mathcal{D}}}
\def\cI{{\mathcal{I}}}
\def\cJ{{\mathcal{J}}}
\def\cK{{\mathcal{K}}}
\def\deg{{\mathrm{deg}}}
\def\1{{\bf{1}}}
\def\rank{\operatorname{rank}}
\def\<#1,#2>{\langle #1,#2\rangle}
\def\<#1>{\langle #1\rangle}
\newtheorem{assumption}{Assumption}
\newcommand{\extp}{\@ifnextchar^\@extp{\@extp^{\,}}}
\def\@extp^#1{\mathop{\bigwedge\nolimits^{\!#1}}}
\begin{document}

\title{A correlatively sparse Lagrange multiplier expression relaxation for polynomial optimization}

\author{Zheng Qu\thanks{Department of Mathematics, The University of Hong Kong, Pokfulam Road, Hong Kong. (email: {zhengqu@hku.hk})}
\and Xindong Tang\thanks{Department of Applied Mathematics, The Hong Kong Polytechnic University,  Hung Hom, Kowloon, Hong Kong. (email: {xdtang@hkbu.edu.hk})}}

\maketitle

\begin{abstract}
In this paper, we consider polynomial optimization with correlative sparsity. We construct correlatively sparse Lagrange multiplier expressions (CS-LMEs) and propose CS-LME reformulations for polynomial optimization problems using the Karush--Kuhn--Tucker optimality conditions. Correlatively sparse sum-of-squares (CS-SOS) relaxations are applied to solve the CS-LME reformulation. We show that the CS-LME reformulation inherits the original correlative sparsity pattern, and the CS-SOS relaxation provides sharper lower bounds when applied to the CS-LME reformulation, compared with when it is applied to  the original problem. Moreover, the convergence of our approach is guaranteed under mild conditions. In numerical experiments, our new approach usually finds the global optimal value (up to a negligible error) with a low relaxation order for cases where directly solving the problem fails to get an accurate approximation. Also, by properly exploiting the correlative sparsity, our CS-LME approach requires less computational time than the original LME approach to reach the same accuracy level.
\end{abstract}

\begin{keywords}
polynomial optimization, correlative sparsity, Lagrange multiplier expressions, Moment-SOS relaxations
\end{keywords}

\begin{MSCcodes}{90C23, 90C06, 90C22}
\end{MSCcodes}

\section{Introduction}
Let $n$ be a positive integer, and let $x:=(x_1,\ldots,x_n)$ be the variable in the $n$-dimensional Euclidean space.
Denote by $\re[x]$ be the ring of real coefficient polynomials in $n$ indeterminates.
We consider the polynomial optimization problem
\begin{equation}\label{eq:pb}
\left\{ \begin{array}{lll}
\displaystyle &\displaystyle \min_{x\in \R^{n}} &f(x)\\
\enspace& \mathrm{s.t.} \quad  &g(x)\geq 0, \quad  h(x)=0.
\end{array}\right.
\end{equation}
In the above, $f \in \R[x]$ is a polynomial,
and $g\in \R[x]^{m}$ and $h\in \R[x]^{\ell}$ are tuples of polynomial functions. 
In~\cite{Lasserre2001}, Lasserre introduced a hierarchy of semidefinite programming (SDP) relaxations to provide a sequence of lower bounds for~\eqref{eq:pb}, which converges to the global optimal value of~\eqref{eq:pb}, under some compactness assumptions. This approach is known as {\it the Moment-SOS relaxations} and has been intensively explored in the last two decades for global solutions of polynomial optimization problems. 
For (\ref{eq:pb}), Nie introduced the {\it Lagrange multiplier expressions} (LMEs) \cite{nie2019tight},
whose existence is guaranteed when $g(x)$ and $h(x)$ are given by generic polynomial functions.
LMEs can be applied to construct the {\it LME reformulation} of (\ref{eq:pb}) using the Karush--Kuhn--Tucker (KKT) optimality conditions,
which guarantees the moment relaxation being exact when the relaxation order is big enough and the global minimum for (\ref{eq:pb}) is attainable.
However, these approaches are usually computationally expensive.
Indeed, even for unconstrained polynomial optimization problems, i.e., $m=\ell=0$,
the moment relaxation for~\eqref{eq:pb} is an SDP problem with matrices of size up to $\binom{n+d}{n} \times \binom{n+d}{n} $, where $d\in\mathbb{N}$ is the relaxation order such that $2d\ge \deg(f)$. 

Given the polynomial optimization problem (\ref{eq:pb}),
let $(\mc{I}_1,\dots,\mc{I}_s)$ be subsets of $[n]:=\{1,\ldots,n\}$ such that $\bigcup_{i=1}^s\mc{I}_i=[n]$,
and denote $x^{(i)}:=(x_j)_{j\in\mc{I}_i}$.
Equation (\ref{eq:pb}) is said to follow the {\it correlative sparsity pattern} (csp) $(\mc{I}_1,\dots,\mc{I}_s)$ if
\begin{enumerate}[label=(\arabic*)]
\item there exist $f_1,f_2,\dots, f_s$ such that every $f_i\in\re[x^{(i)}]$ and $f(x)=f_1(x^{(1)})+\dots+ f_s(x^{(s)})$;
\item there exist partitions $I=(I_1,\dots, I_s)$ of $[m]$ and $E=(E_1,\dots, E_s)$ of $[\ell]$, such that for all $i\in[s]$,
we have $g_{j_1}\in \re[x^{(i)}]$ and $h_{j_2}\in \re[x^{(i)}]$ for every $j_1\in I_i$ and $j_2\in E_i$.
\end{enumerate}
For convenience, we let $m_i:=|I_i|$ and $\ell_i:=|E_i|$,
and denote 
\[g^{(i)}\,\coloneqq\,\left(g_j: j\in I_i \right),\quad
h^{(i)}\,\coloneqq\,\left( h_j: j\in E_i \right).\]
Then, both $g^{(i)}$ and $h^{(i)}$ are subsets of $\re[x^{(i)}]$, and the polynomial optimization~\eqref{eq:pb} with csp $(\mc{I}_1,\dots,\mc{I}_s)$ can be written in the following way: 
\begin{equation}\label{eq:cs-polyopt}\left\{
\begin{array}{lll}
&\min\limits_{x\in \R^n} & f_1(x^{(1)})+f_2(x^{(2)})+\dots+f_s(x^{(s)})\\
&\enspace \mathrm{s.t.} & g^{(1)}(x^{(1)})\ge 0 \ddd  g^{(s)}(x^{(s)})\ge 0,\\
& &  h^{(1)}(x^{(1)})= 0 \ddd  h^{(s)}(x^{(s)})= 0.
\end{array}\right.
\end{equation}

In this paper, we are interested in problems with csp $\{\cI_1,\ldots,\cI_s\}$ that satisfies the \textit{running intersection property} (RIP),
meaning that for each $1\leq i\leq s-1$, $\cI_{i+1} \cap \left( \cI_1 \cup \cdots \cup\cI_{i} \right)\subset \cI_t$ for some $ t\in \{1,\ldots,i\}$; see~\Cref{def:rip}.
The Moment-SOS relaxation with correlative sparsity is studied in \cite{waki2006sums},
and the convergence results are proved in \cite{GrimmNetzerSchweighofer07,Kojima09,Lasserre06,nie2009sparse} for the case when the RIP holds.
Recently, Magron and Wang developed the software {\tt TSSOS} \cite{magron2021tssos} that implements correlative and term sparse SOS relaxations for polynomial optimization (see also \cite{magron2023sparse,TSSOS,CSTSSOS}),
and it has been used in many applications \cite{newton2022sparse,wang2021certifying}.

Note that for any polynomial optimization problem,
the trivial csp, i.e., $s=1$ with $\mc{I}_1=[n]$, always exists.
Our primary interests lie in the cases where  $n$ is much bigger than $\max_{i\in[s]} |\mc{I}_i|$.
For polynomial optimization~\eqref{eq:pb} with the given csp,
we aim to construct reformulations similar to Nie's LME reformulation introduced in \cite{nie2019tight}, while maintaining the correlative sparsity of~\eqref{eq:pb}.
Our main contributions are as follows:
\begin{itemize}[leftmargin=2em]
\item For polynomial optimization with the given csp,
we provide a systematic way to construct correlatively sparse LMEs (CS-LMEs),
which are polynomial functions in $x$ and some auxiliary variables.
\item Based on CS-LMEs, we proposed correlatively sparse reformulations using the KKT optimality conditions.
We show that under some general conditions, the reformulation inherits the csp and the RIP from the original polynomial optimization,
and their optimal values are identical.
\item We show that for a given relaxation order, correlatively sparse SOS (CS-SOS) relaxations always provide tighter lower bounds for the optimal value of the polynomial optimization problem when the CS-LME reformulation is applied.
The asymptotic convergence of our approach is proved under some standard assumptions.
Numerical experiments are given to show the superiority of our CS-LME approach.
\end{itemize}

This paper is organized as follows.
Some preliminaries for polynomial optimization and Lagrange multiplier expressions are given in~\Cref{sc:pre}.
In~\Cref{sc:CSLME}, CS-LMEs are studied, and reformulations based on CS-LMEs are proposed.
\Cref{sc:CSSOS} studies the CS-SOS relaxations for solving CS-LME relaxations.
Numerical experiments are presented in~\Cref{sc:ne}, and we conclude our approach and discuss future work in~\Cref{sc:dis}. In~\Cref{sec:appLME}, we briefly recall the general methodology for the computation of LMEs and CS-LMEs.

\section{Preliminaries}
\label{sc:pre}
\subsection{Notation and definitions}
\label{subsec:nd}
Let $r$ be a positive integer. Denote $[r]:=\{1,\ldots,r\}$, and let $\bI_r$ be the $r$-by-$r$ identity matrix. 
When the dimension is clear, we use ${\bf 0}$ (resp., ${\bf 1}$) to denote the all-zero (resp., all-one) vector.
Given two vectors $v,w\in\re^r$, 
we denote by 
$v\circ w$ the entrywise product of $v$ and $w$, and $v\perp w $ means that $v^{\top}w=0$. For $v\in \R^r$ and  $1\le i\leq j\le r$, we denote by
$v_{i:j}$ the subvector formed by the elements of $v$ indexed from $i$ to $j$, i.e., $v_{i:j}:=\left [ v_{i} ,\cdots ,  v_{j}
\right ]^{\top}$.

Let $z=(z_1,\ldots,z_r)$ be a tuple of variables. Denote by $\R[z]$ the ring of polynomials in variables $z_1,\ldots,z_r$ with real coefficients,
and let $\R[z]^{r\times k}$ (resp., $\R[z]^{r}$) be the set of all $r\times k$ matrices (resp., $r$-dimensional vectors) whose entries are polynomials in $z$.  For a polynomial $p\in \R[z]$, denote by $\deg(p)$ the degree of $p$. 
For an integer $d\in \bN$, let $\R[z]_{d}$ be the $\R$-vector space of real polynomials in $r$ variables of degrees at most $d$. A polynomial $p\in \R[z]$ is a sum-of-squares (SOS) if there exist $\sigma_1, \ldots, \sigma_t \in \R[z]$ such that 
$p=\left(\sigma_1\right)^2+\dots+\left(\sigma_t\right)^2 $. 
Denote by $\Sigma[z]$ the set of SOS polynomials in $z$, and let $\Sigma[z]_d:=\Sigma[z] \cap \R[z]_{d}$.
For $p\in \R[z]$ and  $\mc{R},\mc{S} \subseteq \R[z]$, we define
$p \cdot \cR \,:=\, \{p\cdot q: q\in \cR\}$ and $ \mc{R}+\mc{S}\,\coloneqq\, \{r+s: r\in\mc{R},\, s\in\mc{S}\}.$

Given a tuple $g=(g_1,\ldots,g_{m})\subseteq\re[z]$, the \textit{quadratic module} of $\R[z]$ generated by $g$ is the set
\be\label{eq:qm}
\Qmod(g):= \Sigma[z]+g_1 \cdot \Sigma[z]+\dots+g_m \cdot \Sigma[z],
\ee
and the $2d$th truncation of $\Qmod(g)$ is the set
\be\label{eq:qmd}
\Qmod(g)_{2d}:= \Sigma[z]_{2d}+g_1 \cdot \Sigma[z]_{2d-\deg(g_1)}
+\dots +g_m \cdot \Sigma[z]_{2d-\deg(g_m)}.
\ee
For a tuple $h=(h_1,\ldots,h_{\ell})\subset \R[z]$, the \textit{ideal} of $\R[z]$ generated by $h$ is the set 
\[\Ideal(h):=h_1\cdot \R[z]+\dots+h_{\ell}\cdot \R[z],\]
and the $2d$th truncation of $\Ideal(h)$ is the set 
\[\Ideal(h)_{2d}:=h_1\cdot \R[z]_{2d-\deg(h_1)}+\dots+h_{\ell}\cdot \R[z]_{2d-\deg(h_{\ell})}.\]
For two polynomial tuples $h$ and $g$,
denote 
\be\label{eq:IQ}
\IQ(h,g)\,\coloneqq\, \Ideal(h)+\Qmod(g),
\quad \IQ(h,g)_{2d}\,\coloneqq\, \Ideal(h)_{2d}+\Qmod(g)_{2d}.\ee
Then, it is clear that every polynomial $p\in\IQ(h,g)\subseteq\re[z]$ is nonnegative over the set $\cK:=\{z\in \R^r: h(z)=0,\enspace g(z)\ge0\}$.
Conversely,
when $\IQ(h,g)$ is {\it archimedean}, i.e., when there exists $p\in \IQ(h,g)$ such that $\{z\in \R^r: p(z)\geq 0\}$ is compact (see \cite{lasserre2015introduction}),
all positive polynomials over $\cK$ are in $\IQ(h,g)$.
This result is referred to as Putinar's Positivstellensatz \cite{Putinar93}.
Moreover, when $h=0$ has finitely many real roots,
or when some general optimality conditions hold,
{a polynomial $f\in\re[z]$ is nonnegative over $\cK$ if and only if $f\in\IQ(h,g)_{2d}$ for all $d$ that is sufficiently large (see \cite{NieReal, Nie14}).}

Throughout this paper, $x=(x_1,\ldots,x_n)$ is the tuple of $n$ variables.
Given the csp $(\cI_1\ddd\cI_s)$,  for each $i\in[s]$, we fix a certain ordering for elements in $\cI_i$ and denote by $x^{(i)}$ the tuple of variables  $(x_k: k\in \cI_i)$.
The $j$th variable of $x^{(i)}$, denoted by $x^{(i)}_j$, corresponds to the variable $x_k$ if $j$ is the order of $k$ in  $\cI_i$. 
For example, if $\cI_1$ is ordered as $(1,3,5,6)$, then $x^{(1)}_2=x_3$.
For  polynomial $p\in \R[x]$, denote by $\nabla p\in \R[x]^n$ the gradient of $p$ and 
\be\label{eq:p-gradient}
\nabla_i p \deq \left [  \begin{array}{lll}\frac{\partial p}{\partial x^{(i)}_1} & \cdots & \frac{\partial p}{\partial x^{(i)}_{n_i}} \end{array}\right]^{\top}\in\re [x]^{n_i}.
\ee
When the dimension of the ambient space is clear,
we use $\ei_i$ to denote the $i$th standard basis vector whose $i$th entry is $1$ while all other entries are zeros.
For $k\in\cI_i$, denote 
\be\label{eq:eik} e^{(i)}_k\deq e_j\in\re^{n_i},\ee
where $j$ is the order of $k$ in the tuple $\cI_i$. 
For instance, if $\cI_1$ is ordered as $(1,3,5,6)$,  then $e^{(1)}_3 = e_2\in\re^4$.

\subsection{Moment-SOS relaxation}
\label{subsec:msr}
Denote by $f_{\min}$ the optimal value of the polynomial optimization problem~\eqref{eq:pb}.
Denote by $\mc{K}$ the feasible set of (\ref{eq:pb}), i.e., $\mc{K}:=\{x\in\re^n: h(x)=0,\ g(x)\ge0\}$.
Then finding the global minimum of (\ref{eq:pb}) is equivalent to
\begin{equation}\label{eq:Pcone}
\left\{\begin{array}{lll}
&\displaystyle\max ~~~ &\gamma\\
&\enspace \mathrm{s.t.} \quad  & f-\gamma\in \mc{P}_{d_0}(\mc{K}) .
\end{array}\right.
\end{equation}
In the above, $d_0$ is the degree of $f$, and $\mc{P}_{d_0}(\mc{K})$ is the cone of nonnegative polynomials over $\mc{K}$ with degrees not greater than $d_0$.
A computationally tractable relaxation for (\ref{eq:Pcone}) is called {\it the Moment-SOS relaxation}.
Given the relaxation order $d\in\mathbb{N}$ such that $2d\ge \max\{\deg(f), \deg(g), \deg(h)\}$,
the $d$th order SOS relaxation of (\ref{eq:Pcone}) (and~\eqref{eq:pb}) is
\begin{equation}\label{eq:msos}
\left\{\begin{array}{lll}
&\displaystyle\max ~~~ &\gamma\\
&\enspace \mathrm{s.t.} \quad  & f-\gamma \in  \IQ(h,g)_{2d}.
\end{array}\right.
\end{equation}
{Its dual problem corresponds to the so-called $d$th order moment relaxation of~\eqref{eq:pb},
and this primal-dual pair is referred to as the Moment-SOS relaxation.
Both \eqref{eq:msos} and its dual problems can be written as SDP problems.}
We refer to \cite{henrion2005detecting,Lasserre2001,lasserre2015introduction,lasserre2018moment,laurent2009sums, nie2014ATKM,nie2015linear,nie2023moment} for more references about polynomial optimization and moment problems.

For a relaxation order $d$,
denote by $\theta_d$ the optimal value of~\eqref{eq:msos}. Clearly $\theta_d$ provides a lower bound of $f_{\min}$, i.e. $\theta_d\leq f_{\min}$. 
Convergence of the Moment-SOS relaxation relies on Putinar's Positivestellenstaz~\cite{Putinar93}. 
\begin{theorem}[\cite{Lasserre2001}]
If $IQ(g,h)$ is archimedean, then
$
\lim_{d\rightarrow +\infty} \theta_d=f_{\min}.
$
\end{theorem}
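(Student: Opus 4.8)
The plan is to establish the two bounds $\theta_d \le f_{\min}$ for every admissible $d$ and $\liminf_{d\to\infty}\theta_d \ge f_{\min}$, and then combine them using that the sequence $(\theta_d)$ is nondecreasing. A preliminary observation is that $\mc{K}$ is compact: by archimedeanity of $\IQ(g,h)$ there is $p\in\IQ(g,h)$ with $\{x:p(x)\ge 0\}$ bounded, and since $p\ge 0$ on $\mc{K}$ the set $\mc{K}$ is bounded; being also closed it is compact, so (assuming $\mc{K}\neq\emptyset$) $f$ attains $f_{\min}$ at some $x^\star\in\mc{K}$.

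For the upper bound, recall that $\theta_d$ is the optimal value of $\bQ_d$, which is a rewriting of \eqref{eq:msos}: if $\gamma$ is feasible, then $f-\gamma\in\IQ(h,g)_{2d}\subseteq\IQ(h,g)$, hence $f-\gamma$ is nonnegative on $\mc{K}$ by the remark following \eqref{eq:IQ}; evaluating at $x^\star$ gives $f_{\min}-\gamma\ge 0$, so $\gamma\le f_{\min}$. Taking the supremum over feasible $\gamma$ yields $\theta_d\le f_{\min}$. Since $\IQ(h,g)_{2d}\subseteq\IQ(h,g)_{2(d+1)}$, any $\gamma$ feasible for $\bQ_d$ is feasible for $\bQ_{d+1}$, so $\theta_d\le\theta_{d+1}$; thus $(\theta_d)$ is nondecreasing and bounded above by $f_{\min}$, hence convergent with $\lim_d\theta_d\le f_{\min}$.

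For the lower bound, fix $\epsilon>0$. The polynomial $f-(f_{\min}-\epsilon)$ is strictly positive on $\mc{K}$, so by Putinar's Positivstellensatz \cite{Putinar93} (applicable because $\IQ(g,h)$ is archimedean) it belongs to $\IQ(h,g)$; fixing one such representation $f-(f_{\min}-\epsilon)=\sigma_0+\sum_j g_j\sigma_j+\sum_k h_k\phi_k$ with $\sigma_0,\sigma_j\in\Sigma[x]$ and $\phi_k\in\re[x]$, it has finite degree and therefore lies in $\IQ(h,g)_{2d_\epsilon}$ for some $d_\epsilon\in\NN$. Then $\gamma=f_{\min}-\epsilon$ is feasible for $\bQ_{d_\epsilon}$, so $\theta_{d_\epsilon}\ge f_{\min}-\epsilon$, and by monotonicity $\theta_d\ge f_{\min}-\epsilon$ for all $d\ge d_\epsilon$. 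Letting $\epsilon\downarrow 0$ gives $\lim_{d\to\infty}\theta_d\ge f_{\min}$, and combined with the upper bound this proves $\lim_{d\to\infty}\theta_d=f_{\min}$. The one substantive input is Putinar's Positivstellensatz, which I would invoke as a black box; the only point requiring care is that membership in the untruncated module $\IQ(h,g)$ automatically lands in some truncation $\IQ(h,g)_{2d_\epsilon}$ once a concrete finite representation is fixed — this is precisely what furnishes the order $d_\epsilon$, and no effective bound on $d_\epsilon$ is needed for an asymptotic statement.
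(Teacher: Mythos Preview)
The paper does not supply its own proof of this theorem; it is stated as a known result from \cite{Lasserre2001} and used as background. Your argument is the standard one and is correct: the upper bound $\theta_d\le f_{\min}$ follows because every element of $\IQ(h,g)_{2d}$ is nonnegative on $\mc{K}$, monotonicity comes from the nesting $\IQ(h,g)_{2d}\subseteq\IQ(h,g)_{2(d+1)}$, and the lower bound is obtained from Putinar's Positivstellensatz together with the observation that any fixed SOS representation of $f-(f_{\min}-\epsilon)$ has bounded degree and hence lies in some truncation. Your explicit flagging of the assumption $\mc{K}\neq\emptyset$ is appropriate; the paper implicitly works under this hypothesis throughout.
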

We would like to remark that under some  conditions,
the Moment-SOS relaxations have finite convergence,
i.e., $\theta_d=f_{\min}$ for all $d$ that is big enough.
We refer to \cite{cifuentes2020geometry,de2011lasserre,hua2021exactness,NieReal,Nie14} for more related work.
The Moment-SOS relaxations have been implemented in the software {\tt GloptiPoly 3} \cite{henrion2009gloptipoly}.
In this paper, we also call Moment-SOS relaxations "dense relaxations" or "dense SOS relaxations" to distinguish them from SOS relaxations exploiting the sparsity.

\subsection{Correlatively sparse SOS relaxation}
\label{sc:cssos} Let us
consider the problem~\eqref{eq:cs-polyopt} with csp $(\cI_1,\ldots, \cI_s)$.
For polynomial tuples $h^{(i)},g^{(i)}\in\re[x^{(i)}]$, we
denote by $$\IQ_{\cI_i}(h^{(i)},g^{(i)})$$ the set given by (\ref{eq:qm})--(\ref{eq:IQ}) with $z=x^{(i)}$.
To exploit the correlative sparsity of problem~\eqref{eq:cs-polyopt}, we consider the following relaxation for problem~\eqref{eq:cs-polyopt}:
\begin{equation}\label{eq:msoscs}
\qquad
\left\{\begin{array}{lll}
&\displaystyle\max ~~~ &\gamma\\
&\enspace \mathrm{s.t.} \quad  & f-\gamma \in  \displaystyle \IQ_{\cI_1}\left( h^{(1)}, g^{(1)}\right)_{2d}+\dots+\IQ_{\cI_s}\left( h^{(s)}, g^{(s)}\right)_{2d}.
\end{array}\right.
\end{equation}
We refer to~\eqref{eq:msoscs} as the $d$th order {\it CS-SOS relaxation} of~\eqref{eq:cs-polyopt} \cite{Lasserre06,nie2009sparse,magron2023sparse,waki2006sums},
and denote its optimal value by $\rho_{d}$.
To demonstrate the convergence results for CS-SOS relaxations, we need the following property of csps.
\begin{definition}\label{def:rip}
We say that the csp $(\cI_1,\ldots, \cI_s)$ satisfies the RIP if for every $i\in [s-1]$, there exists $t\leq i$ such that
\begin{align}\label{a:rin}
\mc{I}_{i+1}\bigcap \bigcup_{j=1}^{i} \mc{I}_j \subseteq \mc{I}_t.
\end{align}
\end{definition}
Convergence of the CS-SOS relaxation is derived from the following sparse version of Putinar's Positivestellenstaz.
\begin{theorem}[\cite{GrimmNetzerSchweighofer07,Kojima09,Lasserre06}]\label{th:sparseputinar} Suppose 
$(\cI_1,\ldots,\cI_s)$ satisfies the RIP property, and $\IQ_{{\cI_i}}(g^{(i)},h^{(i)})$ is archimedean for each $i\in [s]$.
If $f(x)\deq f_1(x^{(1)})+\dots+f_s(x^{(s)})$ is  positive  on  the semialgebraic set $\bigcap_{i=1}^s\{x\in \R^n: g^{(i)}(x)\geq 0, h^{(i)}(x)=0\} $, then $$f\in \IQ_{\cI_1}\left( h^{(1)}, g^{(1)}\right)+\dots+\IQ_{\cI_s}\left( h^{(s)}, g^{(s)}\right).$$
\end{theorem}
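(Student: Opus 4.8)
The plan is to prove the statement by induction on the number $s$ of index sets, with the dense Putinar Positivstellensatz \cite{Putinar93} (in the form allowing equality constraints, obtained by treating $h^{(1)}=0$ as $h^{(1)}\ge 0$ and $-h^{(1)}\ge 0$) as the base case $s=1$. Note first that the archimedean hypothesis forces each block feasible set $K_i:=\{x^{(i)}: g^{(i)}(x^{(i)})\ge 0,\ h^{(i)}(x^{(i)})=0\}$ to be compact, a fact I will use repeatedly. The engine of the induction is a \emph{splitting lemma}: if $\cJ_1,\cJ_2\subseteq[n]$ with $\cJ_1\cup\cJ_2=[n]$ and shared variables $C:=\cJ_1\cap\cJ_2$, if the feasible sets $S_1\subseteq\re^{\cJ_1}$ and $S_2\subseteq\re^{\cJ_2}$ are compact, and if $f=p(x_{\cJ_1})+q(x_{\cJ_2})$ is positive on the glued set $\{x\in\re^n: x_{\cJ_1}\in S_1,\ x_{\cJ_2}\in S_2\}$, then there is a polynomial $r\in\re[x_C]$ in the shared variables alone with $p-r>0$ on $S_1$ and $q+r>0$ on $S_2$.

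Granting the splitting lemma, here is how the inductive step goes. Since $(\cI_1,\dots,\cI_s)$ satisfies the RIP it admits a clique (junction) tree; I pick a leaf, relabel it $\cI_s$, so that $\cI_s\cap\bigcup_{j<s}\cI_j\subseteq\cI_t$ for some $t\le s-1$, and relabel $\cI_1,\dots,\cI_{s-1}$ so that this shorter family — the tree with the chosen leaf deleted — again satisfies the RIP on $A:=\bigcup_{j<s}\cI_j$. Apply the splitting lemma with $\cJ_1=A$, $\cJ_2=\cI_s$, $p=f_1(x^{(1)})+\dots+f_{s-1}(x^{(s-1)})$ and $q=f_s(x^{(s)})$; the relevant sets $K_A:=\{x_A: x^{(j)}\in K_j,\ j<s\}$ and $K_s$ are compact because each $K_j$ is, and the glued set is exactly the feasible set of \eqref{eq:genpolyopt1}. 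This produces $r\in\re[x_C]$ with $C\subseteq\cI_t$ and
\[
 p-r>0 \ \text{ on } K_A,\qquad f_s+r>0 \ \text{ on } K_s.
\]
Absorbing $-r$ into the $t$-th summand exhibits $p-r=\sum_{j<s}\tilde f_j(x^{(j)})$ with $\tilde f_j\in\re[x^{(j)}]$, so the induction hypothesis applies to $p-r$ with the csp $(\cI_1,\dots,\cI_{s-1})$ and yields $p-r\in\sum_{j<s}\IQ_{\cI_j}(h^{(j)},g^{(j)})$; meanwhile $f_s+r\in\re[x^{(s)}]$ is positive on $K_s$ and $\IQ_{\cI_s}(g^{(s)},h^{(s)})$ is archimedean, so Putinar gives $f_s+r\in\IQ_{\cI_s}(h^{(s)},g^{(s)})$. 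Adding the two memberships gives $f=(p-r)+(f_s+r)\in\IQ_{\cI_1}(h^{(1)},g^{(1)})+\dots+\IQ_{\cI_s}(h^{(s)},g^{(s)})$.

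It then remains to prove the splitting lemma, which is the heart of the matter. I would define on the compact projection $\pi_C(S_1)$ the fibre-wise value function $\psi_1(c):=\min\{p(y): y\in S_1,\ \pi_C(y)=c\}$, the minimum being attained by compactness, and $\psi_2$ on $\pi_C(S_2)$ analogously from $q$. A short argument with convergent subsequences shows $\psi_1,\psi_2$ are lower semicontinuous, and positivity of $f$ on the glued set translates into $\psi_1(c)+\psi_2(c)>0$ on $\pi_C(S_1)\cap\pi_C(S_2)$, hence $\psi_1+\psi_2\ge\varepsilon>0$ there. One then needs a polynomial $r$ with $r<\psi_1$ on $\pi_C(S_1)$ and $r>-\psi_2$ on $\pi_C(S_2)$ (requirements that are compatible on the overlap thanks to the gap $\varepsilon$): approximate the upper semicontinuous function $-\psi_2$ from above by its Lipschitz inf-convolutions, insert a continuous function strictly between the two envelopes using the gap on the overlap and semicontinuity off it, and finally invoke Stone–Weierstrass to replace it by a polynomial within a sufficiently small uniform error; see \cite{Lasserre06} and its refinements for details.

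The routine parts are the induction bookkeeping, the clique-tree relabeling, and the reductions to the dense Putinar theorem. The main obstacle I expect is the splitting lemma, and within it the construction of the interpolating polynomial $r$ in the shared variables: the fibre-wise value functions are only semicontinuous, so one cannot simply approximate either of them, and one must exploit the overlap structure together with the positivity gap. This is precisely where compactness of the block feasible sets — that is, the archimedean hypothesis — is indispensable, and it is also where the RIP enters, since only the tree structure guarantees that the ``separator'' $C$ at each peeling step is contained in a single earlier index set $\cI_t$, allowing $r$ to be absorbed into one summand.
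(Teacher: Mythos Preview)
The paper does not give its own proof of this theorem: it is stated with attribution to \cite{Lasserre06,Kojima09} and then simply used. So there is no in-paper argument to compare against.

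Your sketch is the standard route to this result, essentially the argument of Lasserre (later streamlined by Grimm--Netzer--Schweighofer and by Kojima--Muramatsu): induction on $s$, peel off a leaf block, split $f$ across the separator by a polynomial $r$ in the shared variables, then apply the dense Putinar theorem on the leaf and the induction hypothesis on the rest. Two small remarks. First, in the RIP ordering given in the paper, $\cI_s$ is automatically a leaf (it has a single neighbour $\cI_t$ and no later block can be its child), so no relabeling is needed; removing $\cI_s$ immediately leaves $(\cI_1,\dots,\cI_{s-1})$ with the RIP intact. Second, your splitting lemma is where all the work lies, and your outline is correct in spirit but compressed: the fibre-wise value functions $\psi_1,\psi_2$ are lower semicontinuous, $-\psi_2$ is upper semicontinuous, and the existence of a continuous (hence polynomial, after Stone--Weierstrass on the compact union $\pi_C(S_1)\cup\pi_C(S_2)$) interpolant strictly between $-\psi_2$ and $\psi_1$ on the overlap, and below $\psi_1$ (resp.\ above $-\psi_2$) off it, is a genuine topological lemma that needs its own careful statement and proof. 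If you intend this as more than a pointer to the literature, that step should be isolated and proved in full.
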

Therefore, under the same conditions as that in~\Cref{th:sparseputinar}, we have
\begin{align}\label{a:rhodlmt}
\lim_{d\rightarrow +\infty} \rho_{d}= f_{\min}.
\end{align}

{Aside from the correlative sparsity,
one can also exploit the {\it term sparsity} of polynomial optimization problems,
or combine both kinds of sparsity to obtain  the so-called {\it correlative and term sparsity SOS relaxations} (CS-TSSOS) of~\eqref{eq:cs-polyopt},
whose convergence is guaranteed with the term sparsity being given by the {\it maximal chordal extension} when the CS-SOS relaxation is convergent \cite{CSTSSOS}.
Since this paper mainly concerns  correlative sparsity,
 we  refer to \cite{magron2023sparse,TSSOS,CSTSSOS} for more details on the exploitation of term sparsity.
The CS-TSSOS relaxations have been recently implemented in the  software {\tt TSSOS} \cite{magron2021tssos}.

\subsection{Optimality conditions and Lagrange multiplier expressions}
\label{sc:preLMEs}
For the polynomial optimization problem \eqref{eq:pb},
the KKT conditions can be described by the following polynomial system in $(x,\lambda)\in\R^{n+m+\ell}$:
\begin{align}\label{a:KKT}
\left\{ 
\begin{array}{c}
\nabla f(x)=\displaystyle\sum_{j=1}^m \lambda_j \nabla g_j(x)+\sum_{j=1}^{\ell}\lambda_{m+j} \nabla h_j(x),\\
h(x)=0,\ 
0\le \lmd_{1:m}\perp g(x)\ge 0.
\end{array}\right.
\end{align}
The pair $(x,\lmd)$ satisfying \eqref{a:KKT} is called a KKT pair,
and the first component $x$ of a KKT pair is called a KKT point of~\eqref{eq:pb}.
Under some constraint qualification conditions,
every minimizer of (\ref{eq:pb}), if it exists, must be a KKT point.
In this case,
minimizing $f$ over the KKT system~\eqref{a:KKT} returns the same optimal value and optimal solutions as the original problem~\eqref{eq:pb}. 
Moreover, conditions guaranteeing the convergence of the dense SOS relaxations are milder for the minimization over the KKT ideal than that for the original problem~\eqref{eq:pb}. 
In particular, convergence can still occur even when the semialgebraic set given by \eqref{a:KKT} is noncompact~\cite{Nie07JPAA,NieDemmelSturmfels06}. 

A drawback, however, of working on the KKT system~\eqref{a:KKT} rather than on the original feasible region $\cK\subseteq \R^n$ is the augmentation of the number of variables from $n$ to $n+m+\ell$,
which causes a significant increase on the computational cost.
To deal with this undesired complexity growth,
Nie~\cite{nie2019tight} proposed polynomial Lagrange multipliers expressions.
For the polynomial optimization problem (\ref{eq:pb}),
let $\hm:=m+\ell$,
and let $c:=(c_1,\dots,c_{\hat m})$ be an enumeration for the constraining pair $(g,h)$.
We denote 
\begin{equation}\label{eq:gsedf}
\bG(x):=\left [ \begin{array}{cccccc}
\nabla c_1(x) & \nabla c_2(x)  & \cdots & \nabla c_{\hm}(x) \\
c_1(x) & 0  & \cdots & 0 \\
0 & c_2(x)  & \cdots & 0 \\
\vdots  & \vdots & \ddots & \vdots \\
0  & \cdots & 0 & c_{\hm}(x) \\
\end{array} \right ], \quad 
\mathbf{f}(x):= \left [\begin{array}{c}
\nabla f(x) \\ 0 \\ \vdots \\0
\end{array}
\right ].
\end{equation}
Then, the following equation holds at every KKT pair $(x,\lmd)$:
\begin{align}\label{a:GMatrix}
\bG(x) \cdot \lambda=\mathbf{f}(x).
\end{align}
If there exists a matrix of polynomials $L\in \R[x]^{\hm\times n}$ and $ D \in \R[x]^{\hm\times \hm}$  such that
\begin{align}\label{a:LGI}
\left [\begin{array}{cc}L(x) & D(x)
\end{array}
\right ]\bG(x)=\bI_{\hm},\enspace \forall x\in \R^n,
\end{align}
then the Lagrange multipliers $\lambda$ can be  expressed as polynomials in $x$:
\begin{equation}\label{eq:lme1}
\left [\begin{array}{c} \lambda_1 \\ \vdots \\ 
\lambda_{\hm}
\end{array}
\right ]
=  
\left [\begin{array}{c}
p_1(x)\\ \vdots \\ p_{\hm}(x)
\end{array}
\right ]:= L(x)\nabla f(x).
\end{equation}
The polynomial vector $p(x):=(p_1(x)\ddd p_{\hm}(x))$ is called the {\it Lagrange multiplier expression} (LME).
Denote
\[
c_{eq}(x):=\left [ \begin{array}{c} \nabla f(x)-\displaystyle\sum_{j=1}^m p_j(x)\nabla g_j(x)-\sum_{j=1}^{\ell}p_{m+j}(x)\nabla h_j(x) \\
h(x)\\
p_{1:m}(x) \circ g(x)
\end{array}\right],
\]
and
\[c_{in}(x):=\left[
\begin{array}{c}
p_{1:m}(x)  \\
g(x) 
\end{array}
\right].\]
Then, $x\in\re^n$ is a KKT point if and only if $x$ satisfies $c_{eq}(x)=0,\ c_{in}(x)\ge0.$
Based on the LME \eqref{eq:lme1}, Nie~\cite{nie2019tight} proposed the following reformulation of (\ref{eq:pb}): 
\begin{equation}\label{eq:pblme}
\left\{\begin{array}{lll}
&\displaystyle\min_{x\in \R^{n}} ~~~ &f(x)\\
&\enspace \mathrm{s.t.} \quad  &
c_{eq}(x)=0,\quad  c_{in}(x)\geq 0
\end{array}\right.
\end{equation}
It is clear that when the minimum of (\ref{eq:pb}) is attained at some KKT points,
the optimal values of (\ref{eq:pb}) and (\ref{eq:pblme}) are identical. 
In fact, the existence of LMEs guarantees that every minimizer of (\ref{eq:pb}), if it exists, must be a KKT point \cite[Proposition~5.1]{nie2019tight},
thus solving (\ref{eq:pb}) is equivalent to solving the reformulation (\ref{eq:pblme}).
When Moment-SOS relaxations are applied,
finite convergence is guaranteed under some generic conditions:
\begin{theorem}\cite[Theorem~3.3]{nie2019tight}
Suppose LMEs exist and (\ref{eq:pblme}) has a nonempty feasible set.
Denote by $\theta_d$ the optimal value of the $d$th order SOS relaxation~\eqref{eq:msos} of   the polynomial optimization problem~\eqref{eq:pblme}.
Then, we have that $f_{\min}=\theta_d$ holds for all $d$ big enough if $IQ(c_{eq},c_{in})$ is archimedean and the minimum value of (\ref{eq:pb}) is attained at a KKT point.
\end{theorem}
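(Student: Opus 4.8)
The plan is to reduce the statement to a single finite‑order Positivstellensatz certificate for the reformulation~\eqref{eq:pblme}, and then to produce that certificate from the KKT structure encoded in $c_{eq}$.

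\emph{Step 1: the reformulation~\eqref{eq:pblme} has optimal value $f^*$, and it is attained.} Since $[L(x)\ D(x)]\,\bG(x)=\bI_{\hm}$ holds for all $x$ while $\bG(x)\lambda=\mathbf{f}(x)$ holds at every KKT pair $(x,\lambda)$ by~\eqref{a:GMatrix}, multiplying the latter on the left by $[L(x)\ D(x)]$ shows $\lambda=L(x)\nabla f(x)=p(x)$ at every KKT pair. Unwinding the definitions of $c_{eq}$ and $c_{in}$, it follows that $x$ is feasible for~\eqref{eq:pblme} if and only if $(x,p(x))$ solves the KKT system~\eqref{a:KKT}; in particular any such $x$ is feasible for~\eqref{eq:pb}, so $f(x)\ge f^*$. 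Conversely, by hypothesis $f^*$ is attained at a KKT point $x^*$ of~\eqref{eq:pb}, whose multiplier is then $p(x^*)$, so $x^*$ is feasible for~\eqref{eq:pblme} with $f(x^*)=f^*$. Hence the optimal value of~\eqref{eq:pblme} equals $f^*$. Write $S:=\{x:c_{eq}(x)=0,\ c_{in}(x)\ge 0\}$ for the feasible set of~\eqref{eq:pblme}; it is nonempty (by assumption) and, by the archimedean hypothesis, compact.

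\emph{Step 2: reduction to one membership.} Every element of $IQ(c_{eq},c_{in})_{2k}$ is nonnegative on $S$, so any $\gamma$ feasible for the SOS relaxation~\eqref{eq:msos} of~\eqref{eq:pblme} satisfies $\gamma\le\min_{S}f=f^*$; thus $\theta_k\le f^*$. The moment relaxation~\eqref{eq:mmom} of~\eqref{eq:pblme} has value at most $\min_S f=f^*$ (evaluate at the truncated moment sequence of a Dirac measure at a point of $S$), so $\theta'_k\le f^*$; and weak duality between~\eqref{eq:msos} and~\eqref{eq:mmom} gives $\theta_k\le\theta'_k$. Because the truncations $IQ(c_{eq},c_{in})_{2k}$ are nondecreasing in $k$, it now suffices to produce a single $k_0$ with
\[
f-f^*\ \in\ IQ(c_{eq},c_{in})_{2k_0},
\]
for then $\theta_k=\theta'_k=f^*$ for all $k\ge k_0$, which is the claim.

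\emph{Step 3: the finite certificate.} The key structural fact is that $f$ takes only finitely many values on $S$. Since the block $p_{1:m}\circ g$ of $c_{eq}$ vanishes on $S$, we have $S=\bigcup_{J\subseteq[m]}S_J$, where $S_J:=\{x\in S:\ g_j(x)=0\ \text{for } j\in J,\ p_j(x)=0\ \text{for } j\notin J\}$. On $S_J$ the stationarity block of $c_{eq}$ reads $\nabla f=\sum_{j\in J}p_j\nabla g_j+\sum_j p_{m+j}\nabla h_j$, so $\nabla f$ lies in the span of the gradients of the constraints that are active throughout $S_J$, and is therefore orthogonal to the tangent space of $S_J$ at each of its smooth points. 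The smooth locus of $S_J$ is a dense semialgebraic subset with finitely many connected components, on each of which $f$ has vanishing derivative and is hence constant; by density and continuity $f(S_J)$ is finite, and so is $f(S)=\bigcup_J f(S_J)$. Writing $f(S)=\{f^*=v_1<v_2<\dots<v_r\}$, the fibre $\{x\in S:f(x)=f^*\}$ is clopen in $S$ while $f-f^*\ge v_2-f^*>0$ on its complement in $S$. With the stationarity equations being part of $c_{eq}$ and vanishing identically on $S$, they supply exactly the slack needed to represent $f-f^*$---which itself vanishes on the set of minimizers---inside the truncated quadratic module at a finite order; this is the gradient/KKT‑ideal finite convergence phenomenon, and the required membership $f-f^*\in IQ(c_{eq},c_{in})_{2k_0}$ follows along the lines of~\cite{NieDemmelSturmfels06} (compare also~\cite{Nie07JPAA,Nie14,nie2019tight}).

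\emph{The main obstacle} is Step 3. Putinar's Positivstellensatz~\cite{Putinar93} does not apply to $f-f^*$ directly, because $f-f^*$ is not strictly positive on $S$: it vanishes on the minimizers of~\eqref{eq:pblme}. Hence compactness alone yields no certificate at any finite order, and it is precisely the KKT/gradient structure built into $c_{eq}$ that must be exploited; making this rigorous---bounding $k_0$ through the stratification of $S$ and the gradient ideal---is where the real work lies.
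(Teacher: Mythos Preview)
The paper does not give its own proof of this theorem: it is quoted from \cite[Theorem~3.3]{nie2019tight} as background in Section~\ref{sc:preLMEs}, with no argument supplied. There is therefore no paper-proof to compare against.

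Regarding your proposal itself: Steps~1 and~2 are clean and correct. Step~3 has the right skeleton---the finiteness of the values $f(S)$ is indeed the decisive structural fact, and it is exactly what drives Nie's original argument in \cite{nie2019tight}. But you do not actually carry out the passage from ``$f$ takes finitely many values on $S$'' to the membership $f-f^*\in IQ(c_{eq},c_{in})_{2k_0}$; you only assert that it ``follows along the lines of~\cite{NieDemmelSturmfels06}'' and then concede that ``making this rigorous\ldots is where the real work lies.'' That passage is nontrivial: one has to manufacture, from the finite list $v_1<\dots<v_r$ of critical values, an explicit algebraic identity (in Nie's proof, via a univariate interpolant acting on $f$) which, combined with the stationarity block of $c_{eq}$ and the archimedean hypothesis, produces a truncated certificate. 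Your finiteness argument is also a little loose: $S_J$ is a semialgebraic set, not a variety, so ``the smooth locus is dense'' should be replaced by a stratification into smooth submanifolds before the constancy-of-$f$ argument applies on each piece. In sum, what you have written is a correct outline of the approach in \cite{nie2019tight}, not a proof.
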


Recently, LMEs have been widely used in various problems given by polynomial functions,
such as bilevel polynomial optimization, Nash equilibrium problems, tensor computation, etc.
We refer to \cite{nie2020nash,NieTang21,nie2021lagrange,nie2018complete,nie2021saddle} for applications of LMEs.

{One wonders when  LMEs exist, i.e., when there exist matrices $L(x), D(x)$ such that~\eqref{a:LGI} holds.
We say that the constraining tuple $(g,h)$ is {\it nonsingular} if the matrix $\bG(x)$ given in (\ref{eq:gsedf}) has a full column rank for all $x\in\mathbb{C}^n$.
For (\ref{eq:pb}), LMEs exist if and only if its constraining tuple is nonsingular \cite[Proposition~5.1]{nie2019tight}.
We would like to remark that when the polynomials $c_1\ddd c_{\hm}$ are generic\footnote{We say a property holds generically if it holds for all points of input data but a set of Lebesgue measure zero.},
the nonsingularity condition holds.
However, there are cases when LMEs do not exist; see~\cref {ep:ball} for a concrete example and also~\cite{nie2019tight,NieTang21} for more details. 
In the following example, we give the matrices $L(x)$ and $D(x)$ for a special box-constrained problem.  The general methodology for formulating  LMEs
can be found in~\Cref{sec:appLME}.
\begin{example}\label{ep:box}
Consider the polynomial optimization problem with box constraints
\begin{equation}\label{eq:box}
\left\{\begin{array}{lll}
\displaystyle\min_{x\in \R^4} & f(x_1,x_2,x_3,x_4):=x_1^4x_2^2+x_1^2x_2^4+x_3^6\\
& \qquad\qquad\qquad\qquad\qquad -3x_1^2x_2^2x_3^2+x_3^3+x_3x_4^2-2x_3^2x_4\\
\enspace \mathrm{s.t.}  & x_1 \ge 0,\ 1-x_1\ge0,\ x_2\ge0,\ 1-x_2\ge0,\\
& x_3 \ge 0,\ 1-x_3\ge0,\ x_4\ge0,\ 1-x_4\ge0.
\end{array}\right.
\end{equation}
Note that in this problem, since all variables are nonnegative,
by the inequality of arithmetic and geometric means,
we have
\[\begin{aligned}
x_1^4x_2^2+x_1^2x_2^4+x_3^6 &\ge 3\sqrt[3]{x_1^4x_2^2\cdot x_1^2x_2^4\cdot x_3^6}=3x_1^2x_2^2x_3^2,\\
x_3^3+x_3x_4^2&\ge 2\sqrt{x_3^3\cdot x_3x_4^2} =2x_3^2x_4,
\end{aligned}
\]
where the equailities hold when $x_1=x_2=\dots=x_4$.
So the global minimum of \eqref{eq:box} is $0$ with minimizers $(t,t,t,t)$ for all $t\in[0,1]$.
Let $g(x):=(g_1(x)\ddd g_{8}(x))$ with
\be\label{eq:boxconstraints}
\begin{array}{l}
g_1(x) = x_1,\quad g_2(x) = 1-x_1, \quad g_3(x) = x_2, \quad g_4(x) = 1-x_2,\\
g_5(x) = x_3,\quad g_6(x) = 1-x_3, \quad g_7(x) = x_4, \quad g_8(x) = 1-x_4.
\end{array}
\ee
 The constraining tuple $g$ is nonsingular and~\eqref{a:LGI} holds with  
$$L(x)=\mbox{diag}(L_1(x),L_2(x),L_3(x),L_4(x)),\enspace D(x)=\mbox{diag}(D_1(x),D_2(x),D_3(x),D_4(x)) $$
being block-diagonal matrices. The matrices in the diagonal of $L$ are given by 
 $L_i(x)=\left[\begin{array}{l} 1-x_i\\ -x_i\end{array} \right]$ and  the matrices in the diagonal of $D$ are given by $D_i(x)=\left[\begin{array}{ll}1 & 1\\1 & 1
 \end{array}\right]$ for each $i\in [4]$.
Accordingly, the LMEs are
\be\label{eq:boxlme}
p_{2i-1}(x) = (1-x_i)\cdot \frac{\pt f}{\pt x_{i}}(x),\quad p_{2i}(x) = -x_i\cdot \frac{\pt f}{\pt x_i}(x),\quad (i=1\ddd 4).
\ee
In particular,
 $p_5(x),\, p_6(x)$  can be explicitly written as
\be \label{eq:p5p6}
\begin{array}{l}
p_5(x) = 6(x_1^2x_2^2x_3^2 - x_3^6 - x_1^2x_2^2x_3 + x_3^5)\\
 \qquad\qquad\qquad - 3x_3^3 + 4x_3^2x_4 - x_3x_4^2 + 3x_3^2 - 4x_3x_4 + x_4^2,\\
p_6(x) =6x_1^2x_2^2x_3^2 - 6x_3^6 - 3x_3^3 + 4x_3^2x_4 - x_3x_4^2,
\end{array}
\ee
which involve of the four variables $x_1, x_2, x_3, x_4$.
\end{example}
}

\section{Correlatively sparse LMEs and reformulations}
\label{sc:CSLME}
We consider the polynomial optimization problem~\eqref{eq:cs-polyopt} with the csp $(\cI_1\ddd \cI_s)$  satisfying the RIP. 
For $i\in [s]$,
we denote by $\bG^{(i)}$ the polynomial matrix $\bG$ given in~\eqref{eq:gsedf} associated with $(g^{(i)},h^{(i)})$. That is, if we let $c^{(i)}=(g^{(i)},h^{(i)})$, and $\hat{m}_i:=m_i+\ell_i$, then
\begin{equation}\label{eq:gisedf}
\bG^{(i)}(x^{(i)}):=\left [ \begin{array}{cccccc}
\nabla_i c^{(i)}_1(x^{(i)}) & \nabla_i c^{(i)}_2(x^{(i)})  & \cdots & \nabla_i c^{(i)}_{\hat{m}_i}(x^{(i)}) \\
c^{(i)}_1(x^{(i)}) & 0  & \cdots & 0 \\
0 & c^{(i)}_2(x^{(i)})  & \cdots & 0 \\
\vdots  & \vdots & \ddots & \vdots \\
0  & \cdots & 0 & c^{(i)}_{\hat{m}_i}(x^{(i)}) \\
\end{array} \right ].
\end{equation}

As mentioned in Section~\ref{sc:preLMEs},
one can reformulate polynomial optimization problems with LMEs,
from which the Moment-SOS relaxation gives a tighter lower bound for the polynomial optimization.
To apply LMEs,
the KKT system of~\eqref{eq:cs-polyopt} corresponds to the following semialgebraic set on $x\in \R^n$,  $\lambda^{(1)}\in \R^{\hat{m}_1},\ldots,\lambda^{(s)}\in \R^{\hat{m}_s}$:
\be\label{eq:cs-KKT}
\left\{\begin{array}{c}\nabla f_1 (x)+\dots+\nabla f_s (x)  = \displaystyle \sum_{i=1}^s \left(\sum_{j=1}^{m_i}\lmd^{(i)}_{j} \nabla g^{(i)}_{j}(x)+ \sum_{j=1}^{\ell_i}\lambda^{(i)}_{m_i+j} \nabla h^{(i)}_{j}(x)
\right),\\
h^{(i)}(x)=0,\enspace i\in [s],\\
0\le \lmd_{1:m_i}^{(i)}\perp g^{(i)}(x)\ge 0,\enspace i\in [s].
\end{array}
\right.
\ee
Hereafter,
we  additionally assume that the nonsingularity condition holds for the constraining pair $(g^{(i)},h^{(i)})$ within every $\cI_i$.
That is:
\begin{assumption}\label{ass:lme}
For each $i\in [s]$, there exist polynomial matrices $L^{(i)}(x^{(i)}) \in \R[x^{(i)}]^{\hat{m}_i\times n_i}$ and 
$D^{(i)}(x^{(i)}) \in \R[x^{(i)}]^{\hat{m}_i \times \hat{m}_i}$ such that
\begin{equation}
\label{eq:cs-LGI}
\left[ 
\begin{array}{ll}
L^{(i)}(x^{(i)}) & D^{(i)}(x^{(i)})
\end{array}\right] \bG^{(i)}(x^{(i)})
=\bI_{\hat{m}_i}.
\end{equation}
\end{assumption}
By \cite[Proposition~5.2]{nie2019tight},
(\ref{eq:cs-LGI}) holds if and only if the matrix $\bG^{(i)}(x^{(i)})$ have a full column rank for all $x^{(i)}\in\mathbb{C}^{n_i}$.
For such cases, we say the pair $(g^{(i)},h^{(i)})$ is {\it nonsingular}.
This is satisfied if all polynomials in $g^{(i)}$ and $h^{(i)}$ are generic polynomials in $x^{(i)}$.

{\subsection{Limitation of the original LME for exploiting correlative sparsity}
For the polynomial optimization \eqref{eq:cs-polyopt} with correlative sparsity, LMEs exist if and only if the constraining tuple of all the constraints is nonsingular, 
by \cite[Proposition~5.1]{nie2019tight}.
In general, 
\Cref{ass:lme} is a necessary but not sufficient condition of the nonsingularity for the constraining tuple $(g,h)$ of all constraints in~\eqref{eq:cs-polyopt}. This can be seen in the following example.} 
{
\begin{example}\label{ep:ball}
Consider the following polynomial optimization problem with three variables $x=(x_1, x_2,x_3)$ and two constraints 
\begin{equation}\label{eq:ex11}
\left\{\begin{array}{lll}
&\displaystyle\min_{x\in\re^3} ~~~ &f_1(x_1,x_2)+f_2(x_2,x_3)\\
&\enspace \mathrm{s.t.} \quad  &
1-x_1^2 -x_2^2\geq 0\\
&& 1-x_2^2- x_3^2 \geq 0 
\end{array}\right.
\end{equation}
Let $\cI_1=\{1,2\}$ and $\cI_2=\{2,3\}$.
Then~\eqref{eq:ex11} has the csp $(\cI_1, \cI_2)$ with
\be\label{eq:ball-csp}
g^{(1)}=\left(1-x_1^2 -x_2^2\right),\quad g^{(2)}=\left(1-x_2^2- x_3^2\right),\quad h^{(1)}=h^{(2)}=\emptyset.\ee
 The matrix $\bG(x)$ associated to~\eqref{eq:ex11} is
$$
\bG(x)=\left [ \begin{array}{cc}
-2x_1  & 0 \\
-2x_2  & -2x_2\\
0 & -2x_3  \\
1-x_1^2- x_2^2 & 0 \\
0 & 1-x_2^2 - x_3^2
\end{array} \right ],
$$
whose rank is $1$ at $x=(0,1,0)$.
Thus the constraining tuple of (\ref{eq:ex11}) is not nonsingular,
and LMEs do not exist.
On the other hand,
we have
$$\bG^{(1)}(x_1,x_2)=\left[ \begin{array}{c}
-2x_1 \\ -2 x_2 \\ 1-x_1^2-x_2^2
\end{array}\right],\enspace \bG^{(2)}(x_2,x_3)=\left[ \begin{array}{c}
-2x_2 \\ -2 x_3 \\ 1-x_2^2-x_3^2
\end{array}\right]. $$
One may check  that \Cref{ass:lme} holds with
\be \label{eq:L1L2}L^{(1)}(x_1,x_2)= \left [-\frac{1}{2}x_1 \enspace \enspace -\frac{1}{2}x_2\right ],\enspace L^{(2)}(x_2,x_3)= \left [-\frac{1}{2}x_2 \enspace \enspace -\frac{1}{2}x_3\right ],\ee
and $D^{(1)}=D^{(2)}=1$.
\end{example}
}
\begin{remark}
See also~\Cref{ep:box-ne}(ii), \Cref{ex:5balls}, \Cref{ep:nonaive} and \Cref{ep:ls} in~\Cref{sc:ne} for cases which satisfy~\Cref{ass:lme} but do not admit LMEs.
\end{remark}

{Another concern related to the original LME approach is that the LME reformulation (\ref{eq:pblme}), if exists, usually cannot inherit the csp of (\ref{eq:cs-polyopt}).
Indeed, the LME reformulation (\ref{eq:pblme}) may have constraints that involve all the variables, as demonstrated by the following example.
\begin{example}\label{ep:boxexplain}
Consider the polynomial optimization problem~\eqref{eq:box} with box constraints.
Let $\mc{I}_1=\{1,2,3\}$ and $\cI_2=\{3,4\}$.
Then (\ref{eq:box}) has the csp $(\cI_1,\cI_2)$ with $h^{(1)}=h^{(2)}=\emptyset$ and
\be\label{eq:box-csp}
\begin{aligned}
&f_1(x^{(1)}) = x_1^4x_2^2+x_1^2x_2^4+x_3^6-3x_1^2x_2^2x_3^2, \quad
f_2(x^{(2)}) =  x_3^3+x_3x_4^2-2x_3^2x_4,\\
&g^{(1)}(x^{(1)}) = (x_1,\, 1-x_1,\, x_2,\, 1-x_2), \quad
g^{(2)}(x^{(2)}) = (x_3,\, 1-x_3,\, x_4,\, 1-x_4).
\end{aligned}\ee
In view of~\eqref{eq:p5p6}, the LME reformulation~\eqref{eq:pblme} of~\eqref{eq:box} does not have correlative sparsity, as the nonnegativity conditions for Lagrange multipliers $p_5(x)\ge0$ and $p_6(x)\ge0$ involve all variables.
\end{example}}
In the next two subsections,
we provide a systematic method to construct LMEs for~\eqref{eq:cs-polyopt} which leverages the correlative sparsity pattern $(\mc{I}_1,\dots, \mc{I}_s)$.

\subsection{Correlatively sparse LMEs: two blocks}\label{subsec:tb}

We begin with the case of two blocks, i.e.,  $s=2$.
{Before giving a formal presentation of our approach, we would like to expose the underlying idea through the following example of three variables:
\begin{equation}\label{eq:ex1g}
\left\{\begin{array}{lll}
&\displaystyle\min ~~~ &f_1(x_1,x_2)+f_2(x_2,x_3)\\
&\enspace \mathrm{s.t.} \quad  &
g_1(x_1,x_2)\geq 0,\\
&& g_2(x_2,x_3) \geq 0.
\end{array}\right.
\end{equation}
The problem~\eqref{eq:ex1g} has the csp $(\cI_1,\cI_2)$ with $\cI_1=\{1,2\}$ and $\cI_2=\{2,3\}$.
Recall that for each $i\in [s]$, the partial gradient $\nabla_i$ is defined as in~\eqref{eq:p-gradient}.
Under \Cref{ass:lme}, there exist polynomial matrices $L^{(1)}\in \R[x_1,x_2]^2,\, D^{(1)}\in \R[x_1,x_2],\, L^{(2)}\in \R[x_2,x_3]^2$, and $D^{(2)}\in \R[x_2,x_3]$ such that for each $i=1,2$,
\be 
\label{eq:exllme}
\left[ \begin{array}{ll} L^{(i)}(x_1,x_2) & D^{(i)}(x_1,x_2)\end{array}\right] \left[ \begin{array}{c}
\nabla_i g_i(x_1,x_2)\\
g_i(x_1,x_2)
\end{array}\right]=1.
\ee
The KKT system of~\eqref{eq:ex1g} is
\be\label{eq:cs-KKTex11}
\left\{\quad \begin{aligned}
\frac{\partial f_1}{ \partial x_1} (x_1,x_2)\ &= \  \lambda_1\cdot \frac{ \partial g_1 }{\partial x_1}(x_1,x_2),\\
\frac{\partial f_1}{\partial x_2}(x_1,x_2)+\frac{\partial f_2}{\partial x_2}(x_2,x_3)\ &=\ \lambda_1\cdot \frac{\partial g_1} {\partial x_2} (x_1,x_2)+\lambda_2\cdot \frac{ \partial g_2 }{\partial x_2}(x_2,x_3), \\
\frac{\partial f_2}{\partial x_3}(x_2,x_3)\ 
&= \ 
\lambda_2\cdot \frac{\partial g_2} {\partial x_3} (x_2,x_3) 
,\\
0\ \le\  g_1(x_1,x_2)\ &\perp \ \lmd_{1}\ \ge\ 0, \\
0\ \le\  g_2(x_2,x_3)\ &\perp \ \lmd_{2}\ \ge\ 0.
\end{aligned}
\right.
\ee
Clearly, the csp structure is broken when $f_1,f_2,g_1,g_2$ are dense polynomials,
due to the second equation above.
Introducing an auxiliary variable $\nu$, we rewrite~\eqref{eq:cs-KKTex11} as
\be\label{eq:cs-KKTex11r}
\left\{\quad \begin{aligned}
\nabla_1 f_1(x_1,x_2) + \begin{bmatrix} 0 \\ \nu\end{bmatrix}\  & =\  \lambda_1\cdot \nabla_1 g_1 (x_1,x_2),\\
0\le \  g_1(x_1,x_2)\ &\perp \ \lmd_1\ \ge 0, \\
\nabla_2 f_2(x_2,x_3) - \begin{bmatrix} \nu \\ 0\end{bmatrix}\  & =\  \lambda_2\cdot \nabla_2 g_2 (x_2,x_3),\\
0\le \  g_2(x_2,x_3)\ &\perp \ \lmd_2\ \ge 0, 
\end{aligned}
\right.
\ee
Thus by~\eqref{eq:exllme}, for any $(x_1,x_2,\lambda_1,\lambda_2, \nu)$ satisfying~\eqref{eq:cs-KKTex11r}, we must have
\be \label{eq:lambda1lambda2}
\begin{aligned}
\lambda_1 &=L^{(1)}(x_1,x_2) \left(\nabla_1 f_1(x_1,x_2) + \begin{bmatrix} 0 \\ \nu\end{bmatrix}\right),\\
\lambda_2 &=L^{(2)}(x_2,x_3) \left(\nabla_2 f_2(x_2,x_3) - \begin{bmatrix} \nu \\ 0\end{bmatrix}\right).
\end{aligned}
\ee
Under some constraint qualification conditions,
we arrive at a reformulation for (\ref{eq:ex1g}) which possess the  csp with two blocks of variables $$(x_1,x_2,\nu), \enspace (x_2,x_3,\nu)$$
by plugging~\eqref{eq:lambda1lambda2} back into~\eqref{eq:cs-KKTex11r} to replace $\lambda_1$ and $\lambda_2$.

\begin{example}\label{ep:ball-cslme}
Consider the polynomial optimization problem \eqref{eq:ex11} as a special case of~\eqref{eq:ex1g}.  
Recall that \Cref{ass:lme} holds with $L^{(1)}$ and $L^{(2)}$ given in~\eqref{eq:L1L2}.
In view of~\eqref{eq:lambda1lambda2}, we have
\be\label{eq:ball-cslme}
\begin{aligned}
\lambda_1= p^{(1)}(x_1,x_2,\nu)& :=
-\frac{x_1}{2} \frac{\partial f_1}{\partial x_1} (x_1,x_2)-\frac{x_2}{2} \frac{\partial f_1}{\partial x_2} (x_1,x_2)-\frac{x_2}{2}  \nu, \\
\lambda_2=p^{(2)}(x_2,x_3,\nu)& :=-\frac{x_2}{2} \frac{\partial f_2}{\partial x_2} (x_2,x_3) -\frac{x_3}{2}  
\frac{\partial f_2}{\partial x_3} (x_2,x_3)+\frac{x_2}{2} \nu.
\end{aligned}\ee
Suppose the minimum value of (\ref{eq:ex11}) is attained at a KKT point $x^*$.
Then there exists $\nu^*\in\re$ such that (\ref{eq:cs-KKTex11r}) holds at $(x^*,\nu^*)$ with $\lmd_1,\lmd_2$ given by (\ref{eq:ball-cslme}).
Taking (\ref{eq:cs-KKTex11r}) as constraints with $\lmd_i$ being substituted by $p^{(i)}$ for every $i=1,2$,
we arrive at the following optimization problem:
\be\label{eq:ball-cslme-reform}
\left\{\begin{array}{lll}
&\displaystyle\min_{x,\nu} ~~~ &f_1(x_1,x_2)+f_2(x_2,x_3)\\
&\enspace \mathrm{s.t.} \quad  &
\nabla_1 f_1(x_1,x_2) + \begin{bmatrix} 0 \\ \nu\end{bmatrix} = -2p^{(1)}(x_1,x_2,\nu)\cdot \begin{bmatrix} x_1 \\ x_2\end{bmatrix},\\[11pt]
&& 0\le \left(1-x_1^2-x_2^2\right)\perp p^{(1)}(x_1,x_2,\nu) \ge0,\\[5pt]
&& \nabla_2 f_2(x_2,x_3) - \begin{bmatrix} \nu \\ 0\end{bmatrix} = -2p^{(2)}(x_2,x_3,\nu)\cdot \begin{bmatrix} x_2 \\ x_3\end{bmatrix},\\[11pt]
&& 0\le \left(1-x_2^2-x_3^2\right)\perp p^{(2)}(x_2,x_3,\nu) \ge0.
\end{array}\right.
\ee
Then $(x^*,\nu^*)$ is a global minimizer for (\ref{eq:ball-cslme-reform}).
As we will formally introduce later, polynomials $p^{(1)},p^{(2)}$ representing $\lmd_1,\lmd_2$ are called CS-LMEs, and (\ref{eq:ball-cslme-reform}) is called the {\it CS-LME reformulation} for (\ref{eq:ex11}).

Recall from~\Cref{ep:ball} that~\eqref{eq:ex11} does not admit LMEs, thus the LME reformulation (\ref{eq:pblme}) is not available for (\ref{eq:ex11}). One may consider a reformulation of~\eqref{eq:ex11} using the KKT system~\eqref{eq:cs-KKTex11} by taking $\lmd_1,\lmd_2$ as new variables. Then the total number of variables in this approach is $5$ and there is no correlative sparsity anymore. Instead, by appropriately adding extra variable $\nu$, we obtained the CS-LME reformulation (\ref{eq:ball-cslme-reform}) which maintains to a  degree the original csp structure: we have three variables in each of the two blocks. 
\end{example}
}

Now we present formally the CS-LME approach for the polynomial optimization problem~\eqref{eq:cs-polyopt} with two block csp structure.
{Given the csp $(\cI_1,\cI_2)$, we introduce extra variables $\nu:=(\nu_k)_{k\in\cI_1\cap\cI_2}$.
Then, the gradient of the objective function $\nabla f_1(x)+\nabla f_2(x)$ can be split into two terms such that one only involves $(x^{(1)},\nu)$ and the other one only has $(x^{(2)},\nu)$.
Recall that for  $i\in \{1,2\}$ and $k\in\cI_i$, the vector $\ei^{(i)}_k$ is defined in (\ref{eq:eik}). 
Let 
\be \label{eq:F12}
\begin{array}{l}F^{(1)}(x^{(1)},\nu)\deq \nabla_1 f_1(x^{(1)})+\sum_{k\in\cI_1\cap\cI_2}\nu_k\ei^{(1)}_k\in\re^{n_1}, \\
F^{(2)}(x^{(2)},\nu)\deq \nabla_2 f_2(x^{(2)})-\sum_{k\in\cI_1\cap\cI_2}\nu_k\ei^{(2)}_k\in\re^{n_2}.
\end{array}
\ee
Then,
$(x,\lmd^{(1)},\lmd^{(2)})$ is a KKT tuple of (\ref{eq:cs-polyopt}) if and only if there exists $\nu=(\nu_k)_{k\in\cI_1\cap\cI_2}$ such that}
\be\label{eq:cs-KKT-2}
\left\{\begin{array}{c}
F^{(1)}(x^{(1)},\nu)  = \displaystyle \sum\nolimits_{j=1}^{m_1}\lmd^{(1)}_{j} \nabla_1 g^{(1)}_{j}(x^{(1)})+ \sum\nolimits_{j=1}^{\ell_1}\lambda^{(1)}_{m_1+j} \nabla_1 h^{(1)}_{j}(x^{(1)}),\\[6pt]
F^{(2)}(x^{(2)},\nu)  = \displaystyle \sum\nolimits_{j=1}^{m_2}\lmd^{(2)}_{j} \nabla_2 g^{(2)}_{j}(x^{(2)})+ \sum\nolimits_{j=1}^{\ell_2}\lambda^{(2)}_{m_2+j} \nabla_2 h^{(2)}_{j}(x^{(2)}),\\[7pt]
h^{(1)}(x^{(1)})=0, \quad h^{(2)}(x^{(2)})=0,\\[2pt]
0\le \lmd_{1:m_1}^{(1)}\perp g^{(1)}(x^{(1)})\ge 0,\quad 
0\le \lmd_{1:m_2}^{(2)}\perp g^{(2)}(x^{(2)})\ge 0.
\end{array}
\right.
\ee
Under Assumption~\ref{ass:lme},
if we let
\[p^{(1)}(x^{(1)},\nu)\deq L^{(1)}(x^{(1)})F^{(1)}(x^{(1)},\nu),\quad 
p^{(2)}(x^{(2)},\nu)\deq L^{(2)}(x^{(2)})F^{(2)}(x^{(2)},\nu),\]
then by (\ref{eq:cs-LGI}), for any $(x,\lmd^{(1)},\lmd^{(2)},\nu)$ satisfying (\ref{eq:cs-KKT-2}), we have
\[\lmd^{(1)} = p^{(1)}(x^{(1)},\nu),\quad
\lmd^{(2)} =p^{(2)}(x^{(2)},\nu).\]
The polynomial vectors $p^{(1)},\, p^{(2)}$ are called {\it correlatively sparse Lagrange multiplier expression} (CS-LME) for $\lmd^{(1)}$ and $\lmd^{(2)}$, respectively.
Replacing $\lambda^{(1)},\,\lambda^{(2)}$ by the polynomial vectors $p^{(1)}(x^{(1)},\nu)$ and $p^{(2)}(x^{(2)},\nu)$, we get the following  reformulation of~\eqref{eq:cs-polyopt}:
\begin{equation}\label{eq:pblme-2}
\left\{\begin{array}{lll}
\displaystyle\min_{x\in \R^{n}} &f_1(x^{(1)})+f_2(x^{(2)})\\
\enspace \mathrm{s.t.}  &  \displaystyle
F^{(i)}(x^{(i)},\nu) = \sum\nolimits_{j=1}^{m_i}p^{(i)}_{j}(x^{(i)},\nu) \nabla_i g^{(i)}_{j}(x^{(i)})
\\&\qquad \qquad \qquad\enspace 
\displaystyle+\sum\nolimits_{j=1}^{\ell_i}p^{(i)}_{m_i+j}(x^{(i)},\nu) \nabla_i h^{(i)}_{j}(x^{(i)}), & (i=1,2)\\[6pt]
& h^{(i)}(x^{(i)})=0, \quad 0\le p_{1:m_i}^{(i)}(x^{(i)},\nu)\perp g^{(i)}(x^{(i)})\ge 0. & (i=1,2)
\end{array}\right.
\end{equation}
The reformulation~\eqref{eq:ball-cslme-reform} in~\Cref{ep:ball-cslme} is a special case of~\eqref{eq:pblme-2}.
One may check the polynomial optimization problem~\eqref{eq:pblme-2} has the csp with two blocks of variables:
\[ (x^{(1)},\nu),\quad
(x^{(2)},\nu).\]
{\begin{example}\label{ep:box-cslme}
Consider the polynomial optimization problem with box constraints (\ref{eq:box}) in~\Cref{ep:box}.
Its csp is given in~\Cref{ep:boxexplain},
and we have $\cI_1\cap\cI_2=\{3\}$. 
So we need to introduce a new variable $\nu\in\re$.
The $f_1,f_2,g^{(1)},g^{(2)}$ are given as in (\ref{eq:box-csp}), and we let
\[F^{(1)}(x^{(1)},\nu)= \nabla_1 f_1(x^{(1)}) + \nu\ei_3^{(1)},\quad
F^{(2)}(x^{(2)},\nu)=\nabla_2 f_2(x^{(2)}) - \nu\ei_3^{(2)}.\]
Moreover, denoting by $F^{(i)}_{j}$ the $j$th entry of $F^{(i)}$ for $j\in \{1,2\}$, we get CS-LMEs:
\be\begin{array}{lll}
\label{eq:box-cslme}
p^{(1)}_{2j-1}(x^{(1)},\nu)= (1-x_j)F^{(1)}_{j}(x^{(1)},\nu),  & p^{(1)}_{2j}(x^{(1)},\nu)=-x_jF^{(1)}_{j}(x^{(1)},\nu),\\[3pt]
p^{(2)}_{2j-1}(x^{(2)},\nu)=(1-x_{2+j})F^{(2)}_{j}(x^{(2)},\nu), & p^{(2)}_{2j}(x^{(2)},\nu)=-x_{2+j}F^{(2)}_{j}(x^{(2)},\nu).&\\
\end{array}
\ee
Note that when CS-LMEs are given as above,
the first equality constraints in (\ref{eq:pblme-2}) are reduced to
one single equation $F^{(1)}_{3}(x^{(1)},\nu)=0$,
and the complementarity conditions are reduced to 
\[x_j(1-x_j)F^{(1)}_{j}(x^{(1)},\nu)=0, \quad x_{2+j}(1-x_{2+j})F^{(2)}_{j}(x^{(2)},\nu)=0,\quad (j=1,2).\]
Consequently, the CS-LME reformulation to (\ref{eq:box}) is
\be\label{eq:box-cslme-reform}
\left\{\begin{array}{llll}
\displaystyle\min_{x\in \R^{4}} &x_1^4x_2^2+x_1^2x_2^4+x_3^6-3x_1^2x_2^2x_3^2+x_3^3+x_3x_4^2-2x_3^2x_4\\
\enspace \mathrm{s.t.}  &  x_j(1-x_j)F^{(1)}_{j}(x^{(1)},\nu)=0, \  x_{2+j}(1-x_{2+j})F^{(2)}_{j}(x^{(2)},\nu)=0, & (j=1,2)\\
& (1-x_j)F^{(1)}_{j}(x^{(1)},\nu)\ge0,\  (1-x_{2+j})F^{(2)}_{j}(x^{(2)},\nu)\ge0, & (j=1,2)\\
& -x_j F^{(1)}_{j}(x^{(1)},\nu)\ge0,\  -x_{2+j}F^{(2)}_{j}(x^{(2)},\nu)\ge0, & (j=1,2)\\
& F^{(1)}_{3}(x^{(1)},\nu)=0,\  0\le x_1\ddd x_4\le 1.
\end{array}\right.\ee
Later in Section~\ref{sc:ne}, we will compare the numerical performance of solving the CS-LME reformulation~\eqref{eq:box-cslme-reform} of \eqref{eq:box} with solving it directly and solving its LME reformulation~\eqref{eq:pblme}, all using {\tt CS-TSSOS} \cite{CSTSSOS}.
\end{example}}

To summarize,
the LME approach proposed by Nie~\cite{nie2019tight} allows for tightening the classical Moment-SOS relaxation by incorporating necessary polynomial constraints, provided that certain nonsingularity conditions hold.
However, usually this approach cannot keep the csp from the original polynomial optimization problem.
Moreover, when the nonsingularity condition fails, LMEs do not exist.
In contrast, one can try to find the CS-LME instead by adding some new variables.
In the above, we demonstrate how to find CS-LMEs for the two-block cases.
In the next subsection, we provide a systematic way to construct CS-LMEs for an arbitrary number of blocks.

\subsection{Correlatively sparse LME: multi-blocks}\label{subsec:cslmemb}
We introduce how to construct CS-LMEs for an arbitrary number of blocks in this subsection. Hereafter, we assume that the correlative sparsity pattern $(\cI_1,\ldots,\cI_s)$ satisfies the RIP. 
Without loss of generality, we also assume the following conditions hold: 
\begin{itemize}
\item [1.]$\cI_i $ is not included in $ \cI_j $  for any two distinct  $i,j\in [s]$;
\item [2.] $\cI_{i+1}\cap \cup_{j=1}^i \cI_j \neq \emptyset $ for any $i\in [s-1]$.
\end{itemize}
We remark that under the RIP condition, the second condition always holds unless there exists a proper subset $S$ of $[s]$ such that $\left(\cup_{i\in S} \cI_i\right)\cap\left(\cup_{i\notin S} \cI_i\right)=\emptyset$ for which we can solve the polynomial optimization problems for variables within $S$ and outside of $S$ separately.

To construct CS-LME coherent with the csp $(\cI_1,\ldots,\cI_s)$, 
we first build a directed tree with nodes corresponding to the elements in the collection  $\{\mc{I}_1,\dots, \mc{I}_s\}$. 
\begin{algorithm}[H]
\caption{Clique Tree Construction}
\label{A:TC}
\begin{algorithmic}[1]
\Require   $(\mc{I}_1,\dots,\mc{I}_s)$ satisfying the RIP.
\State $V=\{1,\ldots,s\}$ and $A=\emptyset$.
\For {$i=1, \ldots, s-1$}
\If  {$\mc{I}_{i+1}\bigcap \bigcup_{j=1}^{i} \mc{I}_j \neq \emptyset$}
\State Find the largest $t\leq i$ such that $\mc{I}_{i+1}\bigcap \bigcup_{j=1}^{i} \mc{I}_j
\subseteq \mc{I}_t$.
\State $A=A\bigcup \{\left( {i+1}, {t}\right )\}$.
\EndIf
\EndFor
\Ensure ${G(V,A)}$
\end{algorithmic}
\end{algorithm}
The \textit{csp graph} associated with~\eqref{eq:cs-polyopt} is the undirected graph $G^{csp}=G(W,E)$,
with nodes $W=[n]$ and edges $E$ satisfying $\{k_1,k_2\}\in E$ if there exists $i\in [s]$ such that $k_1\in \cI_i$ and $k_2\in \cI_i$.  Since $(\cI_1,\ldots,\cI_s)$ satisfies the RIP,  the corresponding csp graph $G^{csp}$ is chordal\footnote{ A graph is chordal if all its cycles of length at least four have an edge that joins two nonconsecutive nodes. } and $\{\cI_1,\ldots,\cI_s\}$ is  the list of maximal cliques of $G^{csp}$,
because we assumed that $\cI_i$ is not contained in $\cI_j$ for any distinct $i,j\in [s]$.
A \textit{clique tree} of the graph $G^{csp}$ is a tree on the set $V=[s]$ such that for every pair of distinct 
nodes $i,j\in [s]$,  we have $\cI_i \cap \cI_j \subseteq  \cI_k$ for any $k\in [s]$ on the path connecting $i$ and $j$ in the tree. 
Clique tree exists because $G^{csp}$ is chordal; see~\cite[Theorem 3.1]{BP93}. 
The output $G(V,A)$ of~\Cref{A:TC} is a directed tree whose underlying undirected graph is a clique tree of the graph $G^{csp}$. This follows from~\cite[Theorem 3.4]{BP93}.  The directions indicate the ``parent-child" relation between cliques on the tree. 
We refer to~\cite{BP93} for more details on chordal graphs and clique trees.

Given the clique tree $G(V,A)$ produced by \Cref{A:TC},
for each $i\in [s]$,
we denote the indices of children of the node $i$ by 
\be \label{eq:Di}
\cD_i:=\{t: \left({t}, {i}\right)\in A\},
\ee
and the index of the parent of node $i$ by
\be \label{eq:Ai}
\cA_i:=\{t: \left({i},{t}\right)\in A\}.
\ee
For each $i\in \{2,\ldots,s\}$, $\cD_i$ can be empty sets
and $\cA_i$ contains exactly
one element. 
When $\left(i, t\right)\in A$, we let
\be\label{eq:Cit}
\cC_{i,t}:=\mc{I}_i \bigcap \mc{I}_t
\ee
be the indices of all variables shared by block $i$ and block $t$. 
Then, we introduce a group of auxiliary variables:
\be\label{eq:nuitk}
\{\nu_{i,t,k}: (i,t)\in A, k\in \cC_{i,t} \}.
\ee
In other words, for each arc $(i,t)\in A$, we need the same number of auxiliary variables as the number of variables shared by the block $i$ and block $t$.
For every $i\in [s]$, 
define
\be \label{eq:cJ}
\cJ_i:=\left\{(i,t,k): t\in \cA_i,\, k \in \cC_{i,t}\right\} \cup \left\{(t,i,k): {t}\in \cD_i,\, k \in \cC_{t,i}\right\},
\ee
and (recall that the vector $e^{(i)}_k$ is defined in (\ref{eq:eik}))
\be\label{eq:nui}
\nu^{(i)}:=
- \sum_{t\in \cA_i} \sum_{k\in\mc{C}_{i,t}}  \nu_{i,t,k} \ei^{(i)}_k +\sum_{t\in \cD_i} \sum_{k\in\mc{C}_{t,i}}  \nu_{t,i,k} \ei^{(i)}_k \in \re^{n_i}.
\ee
Clearly, the vector $\nu^{(i)}$ only depends on variables {in the group~\eqref{eq:nuitk} }indexed by $\cJ_i$ for each $i\in [s]$. We illustrate how to construct new variables in the following example.
\begin{example}\label{ex:cscser}
Consider the following csp pattern:
\be\label{a:excspp1}
\begin{array}{c}
\cI_1=\{1,2,3,4\},\enspace \cI_2=\{1,2,5,6\},\enspace \cI_3=\{1,2,7,8\},
\\
\enspace\cI_4=\{1,2,9,10\}, \enspace \cI_5=\{1,2,11,12\}.
\end{array}\ee
Then the set of edges $A$ in the clique tree $G(V,A)$ produced by \Cref{A:TC} is 
\[A=\{(2,1),\ (3,2),\ (4,3),\ (5,4)\},\]
and
$\cD_i=\{i+1\}$ for each $i=1\ddd 4$, $\cA_i=\{i-1\}$ for each $ i=2\ddd 5$.
Thus 
\[\begin{aligned}
\cJ_1&=\{(2,1,1),\ (2,1,2)\},\\
\cJ_2&=\{(2,1,1),\ (2,1,2)\}\cup\{(3,2,1),\ (3,2,2)\},\\
\cJ_3&=\{(3,2,1),\ (3,2,2)\}\cup\{(4,3,1),\ (4,3,2)\},\\
\cJ_4&=\{(4,3,1),\ (4,3,2)\}\cup\{(5,4,1),\ (5,4,2)\},\\
\cJ_5&=\{(5,4,1),\ (5,4,2)\}.
\end{aligned}\]
An illustration of the directed tree obtained from~\Cref{A:TC} and auxiliary variables are given in~\Cref{fig:n5}. 
For this clique tree, we have $|\cJ_1|=|\cJ_5|=2$ and $|\cJ_2|=|\cJ_3|=|\cJ_4|=4$.
\end{example}
\begin{figure}[!ht]
\centering
\begin{tikzpicture}[node distance=3cm]
\node (n1) [nodes1] {$\cI_1$};
\node (n2) [nodes1,right of=n1]{$\cI_2$};
\node (n3) [nodes1,right of=n2]{$\cI_3$};
\node (n4) [nodes1,right of=n3]{$\cI_4$};
\node (n5) [nodes1,right of=n4]{$\cI_5$};
\draw [arrow] (n2) -- node[anchor=south]{$\nu_{2,1,1}$}(n1);
\draw [arrow] (n3) -- node[anchor=south]{$\nu_{3,2,1}$}(n2);
\draw [arrow] (n4) -- node[anchor=south]{$\nu_{4,3,1}$}(n3);
\draw [arrow] (n5) -- node[anchor=south]{$\nu_{5,4,1}$}(n4);
\draw [arrow] (n5) -- node[anchor=north]{$\nu_{5,4,2}$}(n4);
\draw [arrow] (n2) -- node[anchor=north]{$\nu_{2,1,2}$}(n1);
\draw [arrow] (n3) -- node[anchor=north]{$\nu_{3,2,2}$}(n2);
\draw [arrow] (n4) -- node[anchor=north]{$\nu_{4,3,2}$}(n3);
\end{tikzpicture}
\caption{Clique tree returned by~\Cref{A:TC} and auxiliary variables for the csp pattern~\eqref{a:excspp1}.}
\label{fig:n5}
\end{figure}
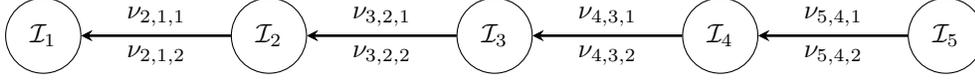

With new variables $\nu_{i,t,k}$ and vectors $\nu^{(i)}$ given by (\ref{eq:nui}),
we rewrite the KKT system~\eqref{eq:cs-KKT}.
For each $i\in [s]$, consider the following system on $(x^{(i)}, \lambda^{(i)},\nu^{(i)})\in \R^{n_i+m_i+\ell_i+|\cJ_i|}$:
\be\label{eq:sub-KKT}
\left\{\begin{array}{c}\nabla_i f_i (x^{(i)})+\nu^{(i)} = \displaystyle\sum_{j=1}^{m_i}\lmd^{(i)}_{j}\nabla_i g^{(i)}_j(x^{(i)})+\sum_{j=1}^{\ell_i}\lambda^{(i)}_{m_i+j} \nabla_i h^{(i)}_{j}(x^{(i)}),\\
h^{(i)}(x^{(i)})=0,\\[4pt]
0\le \lmd_{1:m_i}^{(i)}\perp g^{(i)}(x^{(i)})\ge 0.
\end{array}
\right.
\ee
\begin{proposition}\label{prop:1}
Let $x:=(x_1\ddd x_n)\in \R^n$ and $\lmd:=(\lmd^{(1)}\ddd \lmd^{(s)}) \in \R^{m+\ell}$. The pair $(x,\lmd)$ is a KKT pair of~\eqref{eq:cs-polyopt} if and only if there exists a group of auxiliary variables $\{\nu_{i,t,k}: (i,t)\in A, k\in \cC_{i,t} \}$ such that \eqref{eq:sub-KKT} holds for all $i\in [s]$.
\end{proposition}
\begin{proof}
By lifting all the vectors into $\R^n$ {(i.e., filling in $0$ to the coordinates that are not in $\cI_i$)},
we can rewrite  the first equation in~\eqref{eq:sub-KKT} as
\be\label{eq:lifteqkkt1}
\nabla f_i (x){+\hat\nu^{(i)} } = \displaystyle\sum_{j=1}^{m_i}\lmd^{(i)}_{j}\nabla g^{(i)}_j(x)+\sum_{j=1}^{\ell_i}\lambda^{(i)}_{m_i+j} \nabla h^{(i)}_{j}(x),\ee
where $\hat\nu^{(i)}\in \R^n$ is obtained by lifting $\nu ^{(i)}$ into $\R^n$:
\[\hat{\nu}^{(i)}:=-\sum_{t\in\cA_i}\sum_{k\in\mc{C}_{i,t}}  \nu_{i,t,k} \ei_k +\sum_{t\in \cD_i} \sum_{k\in\mc{C}_{t,i}}  \nu_{t,i,k} \ei_k.\]
If there exists $(x,\lambda)$ and $\{\nu_{i,t,k}: (i,t)\in A, k\in \cC_{i,t} \}$ such that~\eqref{eq:sub-KKT} holds for all $i\in [s]$,
then 
\begin{align*}
\nabla f(x) & \overset{\qquad}{=} \sum_{i=1}^s  \nabla f_i (x)\\
&\overset{\eqref{eq:lifteqkkt1}}{=} \sum_{i=1}^s \left( \sum_{j=1}^{m_i}\lmd^{(i)}_{j}\nabla g^{(i)}_j(x)+\sum_{j=1}^{\ell_i}\lambda^{(i)}_{m_i+j} \nabla h^{(i)}_{j}(x) - \hat \nu^{(i)}\right)
\\& \overset{\qquad}{=} \sum_{i=1}^s \left(\sum_{j=1}^{m_i}\lmd^{(i)}_{j} \nabla g^{(i)}_{j}(x)+ \sum_{j=1}^{\ell_i}\lambda^{(i)}_{m_i+j} \nabla h^{(i)}_{j}(x)
\right)-\sum_{i=1}^s \hat{\nu}^{(i)}\\
&\overset{\qquad}{=} \sum_{i=1}^s \left(\sum_{j=1}^{m_i}\lmd^{(i)}_{j} \nabla g^{(i)}_{j}(x)+ \sum_{j=1}^{\ell_i}\lambda^{(i)}_{m_i+j} \nabla h^{(i)}_{j}(x)
\right).
\end{align*}
Therefore, $(x,\lmd)$ is a KKT pair of~\eqref{eq:cs-polyopt}.
In the following, we show the other direction.

Let $(x,\lmd)$ be a KKT pair of~\eqref{eq:cs-polyopt}.
{For each fixed $k\in [n]$, denote 
\[\cP_k:=\{(i, t)\in A: k\in \cC_{i,t}\},\quad \cQ_k:=\{i: k\in \mc{I}_i\}.\]
In other words, $\cQ_k$ corresponds to the set of cliques that contain $k$ and $G(\cQ_k, \cP_k)$ is the subgraph of $G(V,A)$ induced by the nodes $\cQ_k$. 
Then by \cite[Theorem 3.2]{BP93}, for each $k\in [n]$, the underlying undirected graph of $G(\cQ_k, \cP_k)$  is a tree.}

This allows us to deduce the solvability of the following system of linear equations for each fixed $k\in [n]$:
{\be\label{a:psdd}\begin{array}{l}
\displaystyle \sum_{\substack{t\in\cP_k(i)} } \nu_{i,t,k}- \sum_{\substack{t\in \cP'_k(i)}} \nu_{t,i,k}\\ 
\displaystyle \qquad\qquad
=\frac{\partial f_i}{\partial x_k}(x)-\left(
\sum_{j=1}^{m_i}\lmd^{(i)}_{j} \frac{\partial g^{(i)}_j}{\partial x_k}(x)+ \sum_{j=1}^{\ell_i}\lambda^{(i)}_{m_i+j} \frac{\partial h^{(i)}_j}{\partial x_k}(x)\right),\enspace \forall i\in \cQ_k.
\end{array}\ee
In the above,
\be\label{eq:cpki} \cP_k(i)\deq\{t : \left (i,t\right )\in \cP_k\}, \quad 
\cP'_k(i)\deq\{t : \left (t,i\right )\in \cP_k\}.\ee}
Indeed, the linear system~\eqref{a:psdd} can be written as 
\begin{align}\label{a:Bvb}
Bv=b,
\end{align}
where $B\in \R^{|\cQ_k|\times |\cP_k|}$ is the incidence matrix of $(\cQ_k, \cP_k)$ and $b\in \R^{|\cQ_k|}$ is a vector satisfying ${\bf{1}}^\top b=0$. 
Since {the underlying undirected graph of $G(\cQ_k, \cP_k)$ is a tree}, we have $\rank(B)=|\cP_k|=|\cQ_k|-1$ and
${\bf{1}}^\top  B=0$. Therefore~\eqref{a:Bvb}, and thus~\eqref{a:psdd} for each $k\in[n]$, are solvable.
In other words, there exist $\{\nu_{i,t,k}: (i,t)\in A,\, k\in \cC_{i,t} \}$ such that \eqref{a:psdd} holds for all $k\in [n]$. So  the following equations hold at $(x,\lmd)$:
\be\label{a:sfwea}
{
\begin{array}{ll}
&\displaystyle  \sum_{k\in\cI_i} 
\sum_{\substack{t\in\cP_k(i)} } \nu_{i,t,k} \ei_k
-\sum_{k\in\cI_i} \sum_{\substack{ t\in\cP'_k(i)} } \nu_{t,i,k} \ei_k \\
 = \quad &\displaystyle \sum_{k\in \cI_i} \left(\frac{\partial f_i}{\partial x_k}(x) - \sum_{j=1}^{m_i}\lmd^{(i)}_{j} \frac{\partial g^{(i)}_j}{\partial x_k}(x)- \sum_{j=1}^{\ell_i}\lambda^{(i)}_{m_i+j} \frac{\partial h^{(i)}_j}{\partial x_k}(x)\right) \ei_k,\enspace \forall i\in [s]. 
\end{array}
}
\ee
Note that for any $k\notin \cI_i$, we have 
$$
\frac{\partial f_i}{\partial x_k}(x)\equiv \frac{\partial g^{(i)}_1}{\partial x_k}(x)\equiv\dots \equiv  \frac{\partial g^{(i)}_{m_i}}{\partial x_k}(x)\equiv\frac{\partial h^{(i)}_1}{\partial x_k}(x)\equiv \dots \equiv \frac{\partial h^{(i)}_{\ell_i}}{\partial x_k}(x)\equiv0.
$$
{Therefore,~\eqref{a:sfwea} yields that for each  $i\in[s]$,
\be\label{a:sfwea2}
\begin{aligned}&
  \sum_{k\in \cI_i}\left(\sum_{\substack{t\in\cP_k(i)} } \nu_{i,t,k}
- \sum_{\substack{t\in\cP^{\prime}_k(i)}} \nu_{t,i,k}\right) \ei_k\\
=\quad &\sum_{k\in [n]} \left(\frac{\partial f_i}{\partial x_k}(x)-\sum_{j=1}^{m_i}\lmd^{(i)}_{j} \frac{\partial g^{(i)}_j}{\partial x_k}(x)- \sum_{j=1}^{\ell_i}\lambda^{(i)}_{m_i+j} \frac{\partial h^{(i)}_j}{\partial x_k}(x)\right) \ei_k  \\
= \quad & \nabla f_i(x)- 
\sum_{j=1}^{m_i}\lmd^{(i)}_{j} \nabla g^{(i)}_{j}(x) - \sum_{j=1}^{\ell_i}\lambda^{(i)}_{m_i+j} \nabla h^{(i)}_{j}(x). 
\end{aligned}
\ee
In light of \eqref{eq:Di}--\eqref{eq:Ai}, for each fixed $i\in [s]$, we have
$$
\{(t,k): k\in \cI_i, t\in\cP_k(i)\}=\{(t,k): t\in \cA_i, k\in \cC_{i,t}\}.
$$
 We then obtain 
\begin{align*}
&\sum_{k\in\cI_i}\left(\sum_{\substack{t\in\cP_k(i)} } \nu_{i,t,k}
- \sum_{\substack{t\in\cP^{\prime}_k(i)}} \nu_{t,i,k}\right) \ei_k \\
=\quad & \sum_{t\in\cA_i}\sum_{k\in\mc{C}_{i,t}}  \nu_{i,t,k} \ei_k -\sum_{t\in \cD_i} \sum_{k\in\mc{C}_{t,i}}  \nu_{t,i,k} \ei_k
\\
= \quad &-\hat\nu^{(i)}.
\end{align*}
Therefore,~\eqref{eq:lifteqkkt1} holds, and the first equation in~\eqref{eq:sub-KKT} is satisfied. }
\end{proof}
{
\begin{remark}\label{r:resaAD}
In \Cref{A:TC}, even if we replace line 4  by
\be\label{line4algo}
\mathrm{Find~an~arbitrary~} t\leq i \mathrm{~such~that~}\mc{I}_{i+1}\bigcap \bigcup_{j=1}^{i} \mc{I}_j
\subseteq \mc{I}_t,
\ee 
the resulting tree is still a clique tree,
and the induced subtree property still holds for $G(V,A)$.
Hence,~\Cref{prop:1}, as well as all the results that will follow, still hold if line 4 of~\Cref{A:TC} is replaced with~\eqref{line4algo}.
This is because the only key property of $G(V,A)$ needed in the proof of~\Cref{prop:1} is the {\it induced subtree property} (see \cite[Theorem 3.2]{BP93}) satisfied by the clique tree. 
However, using an arbitrary $t$ as in \eqref{line4algo} may create a large number of children for some nodes (see~\Cref{ep:why-use-largest-t} below),
which will increase the number of variables in $\nu^{(i)}$ (hence the size of blocks and the computational cost). 
In other words, one would prefer a tree with a large depth and small breadth.
That is why we propose choosing the largest $t$ in~\Cref{A:TC}.
\end{remark}
\begin{example}\label{ep:why-use-largest-t}
Consider the csp pattern~\eqref{a:excspp1} again. 
If we use~\eqref{line4algo} to replace line 4 in \Cref{A:TC},
then another possible directed clique tree and auxiliary variables is shown in~\Cref{fig:n53}.
For this clique tree, we have $|\cJ_1|=8$, $|\cJ_2|=|\cJ_3|=|\cJ_4|=|\cJ_5|=2$.
\end{example}
}
\begin{figure}[!ht]
\centering
\begin{tikzpicture}[node distance=3cm]
\node (n1) [nodes1] {$\cI_2$};
\node (n2) [nodes1,right of=n1]{$\cI_1$};
\node (n3) [nodes1,right of=n2]{$\cI_3$};
\node (n4) [nodes1,below of=n1,node distance=2cm]{$\cI_4$};
\node (n5) [nodes1,below of=n3,node distance=2cm]{$\cI_5$};
\draw [arrow] (n1) -- node[anchor=south]{$\nu_{2,1,1}$}(n2);
\draw [arrow] (n3) -- node[anchor=south]{$\nu_{3,1,1}$}(n2);
\draw [arrow] (n4) -- node[anchor=east]{$\nu_{4,1,1}$}(n2);
\draw [arrow] (n5) -- node[anchor=south]{$\nu_{5,1,1}$}(n2);
\draw [arrow] (n5) -- node[anchor=north]{$\nu_{5,1,2}$}(n2);
\draw [arrow] (n1) -- node[anchor=north]{$\nu_{2,1,2}$}(n2);
\draw [arrow] (n3) -- node[anchor=north]{$\nu_{3,1,2}$}(n2);
\draw [arrow] (n4) -- node[anchor=west]{$\nu_{4,1,2}$}(n2);
\end{tikzpicture}
\caption{Another possible clique tree and auxiliary variables for the csp pattern~\eqref{a:excspp1}.}
\label{fig:n53}
\end{figure}
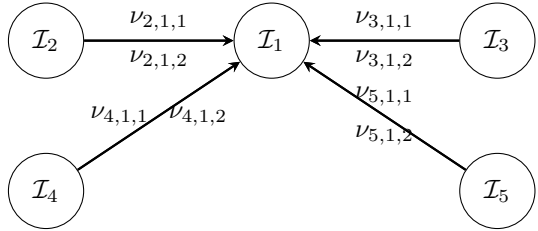

Under \Cref{ass:lme},~\eqref{eq:sub-KKT} implies that the $i$th group of Lagrange multipliers can be expressed by a tuple of polynomials which only depends on variables indexed by $\cI_i$ and $\cJ_i$, say, $x^{(i)}$ and $\nu^{(i)}$ {(by abuse of notation, here $\nu^{(i)}$ means the tuple of all variables involved in the vector $\nu^{(i)}$).
We let 
\be\label{eq:Fi}
z^{(i)}\deq (x^{(i)},\nu^{(i)}),\quad F^{(i)}(z^{(i)}) \deq \nabla_if_i(x^{(i)})+\nu^{(i)}.
\ee}

\begin{theorem}\label{thm:main}
Under~\Cref{ass:lme}, a vector $x\in \R^n$ is a KKT point of~\eqref{eq:cs-polyopt} if and only if the following system~\eqref{eq:sub-KKTLME} holds for each $i\in [s]$:
\be\label{eq:sub-KKTLME}
\left\{\begin{array}{ccc}
F^{(i)}(z^{(i)})= \sum_{j=1}^{m_i} p^{(i)}_{j}\left(z^{(i)}\right)\nabla_i g^{(i)}_j(x^{(i)})+\sum_{j=1}^{\ell_i}p^{(i)}_{m_i+j}\left(z^{(i)}\right)\nabla_i h^{(i)}_{j}(x^{(i)}),\\
0\le p_{1:m_i}^{(i)}(z^{(i)})\perp g^{(i)}(x^{(i)})\ge 0,\ h^{(i)}(x^{(i)})\ge 0,
\end{array}
\right.
\ee
where
\be\label{a:pilme}
p^{(i)}(z^{(i)}) \deq L^{(i)}(x^{(i)})\cdot F^{(i)}(z^{(i)}),
\ee
and $z^{(i)}$ and $F^{(i)}$ are defined in~\eqref{eq:Fi}.
\end{theorem}
\begin{proof}
Recall the matrix of polynomials $\bG^{(i)}(x^{(i)})$ defined in~\eqref{eq:gisedf}.
The system~\eqref{eq:sub-KKT} is equivalent to
\begin{align}\label{a:Ggilambda}
\left\{ 
\begin{array}{c}
\bG^{(i)}(x^{(i)}) \lmd^{(i)}=\left [\,
F^{(i)}(z^{(i)})^{\top}\  0\  \cdots\ 0
\, \right ]^{\top},  \\
\lambda^{(i)}_{1},\ldots, \lambda^{(i)}_{m_i}\geq 0,\ 
g^{(i)}_{1},\ldots, g^{(i)}_{m_i}\geq 0, \ 
h^{(i)}_{1},\ldots, h^{(i)}_{\ell_i}\geq 0, 
\end{array}
\right.
\end{align}
By~\Cref{ass:lme}, the first equation in \eqref{a:Ggilambda} holds if and only if
\[\lambda^{(i)} = L^{(i)}(x^{(i)})\cdot F^{(i)}(z^{(i)}).\]
Thus, it remains to replace $\lambda^{(i)}$ with $p^{(i)}(z^{(i)})$ in~\eqref{eq:sub-KKT} and apply~\Cref{prop:1}.
\end{proof}

\begin{remark}
For the polynomial optimization problem~\eqref{eq:cs-polyopt} with general csp $(\cI_1\ddd \cI_s)$,
we call the vector of polynomials $p^{(i)}(z^{(i)})$ defined in \eqref{a:pilme} the CS-LMEs of $\lmd^{(i)}$.
By Proposition~\ref{prop:1} and Theorem~\ref{thm:main},
CS-LMEs exist when Assumption~\ref{ass:lme} is satisfied.
\end{remark}

\subsection{A CS-LME reformulation}

{
In the rest of this paper, we give a CS-LME reformulation for the polynomial optimization problem~\eqref{eq:cs-polyopt} under~\Cref{ass:lme}.
For each $i\in [s]$, denote 
$$\phi^{(i)}(z^{(i)}):= 
\left[ \begin{array}{c} F^{(i)}(z^{(i)})-\displaystyle\sum_{j=1}^{m_i}p^{(i)}_{j}(z^{(i)})\nabla_i g^{(i)}_j(x^{(i)})-\sum_{j=1}^{\ell_i} p^{(i)}_{m_i+j}(z^{(i)})\nabla_i h^{(i)}_{j}(x^{(i)}) \\
h^{(i)}(x^{(i)})\\
p_{1:m}^{(i)}(z^{(i)}) \circ g^{(i)}(x^{(i)})
\end{array}
\right ],
$$ 
and
$$ 
\psi^{(i)}(z^{(i)}):= \left[\begin{array}{c} p_{1:m}^{(i)}(z^{(i)})  \\
g^{(i)}(x^{(i)})
\end{array}
\right ].
$$
Here, the polynomial $p_j^{(i)}\in \R[z^{(i)}]$ is the $j$th entry of the CS-LME $p^{(i)}$ defined in~\eqref{a:pilme}.
Based on~\Cref{thm:main}, we 
propose the following CS-LME typed reformulation of~\eqref{eq:cs-polyopt}:
\begin{equation}\label{eq:pblmesparse}
\left\{\begin{array}{lcl}
\displaystyle f_c:=&\displaystyle \min_{z^{(1)}\ddd z^{(s)}} ~~~ &f_1(x^{(1)})+\dots+f_s(x^{(s)})\\
&\enspace \mathrm{s.t.} \quad  &
\psi^{(i)}(z^{(i)})\geq 0,\enspace  i \in [s]\\ &&
\phi^{(i)}(z^{(i)})=0,\enspace i\in [s] 
\end{array}\right.
\end{equation}
The previous reformulation~\eqref{eq:pblme-2} for the case $s=2$ is a special case of (\ref{eq:pblmesparse}).}
If we let
\be\label{eq:hatI}
\hat{\cI}_i:=\mc{I}_i\bigcup  \cJ_i,\ee
then~\eqref{eq:pblmesparse} has the csp $(\hat{\cI}_1\ddd \hat{\cI}_s)$. 
Suppose the global minimum $f_{\min}$ of (\ref{eq:cs-polyopt}) is attained at some KKT point,
then at least one minimizer of (\ref{eq:cs-polyopt}) is feasible for (\ref{eq:pblmesparse}), thus $f_{\min}\ge f_c$.
Since the feasible set of (\ref{eq:pblmesparse}) is contained in the feasible set of (\ref{eq:cs-polyopt}), we have $f_{\min}\le f_c$.
So, we conclude the following from the statement above.
\begin{theorem}\label{thm:sec3}
If the minimum $f_{\min}$ of (\ref{eq:cs-polyopt}) is attained at a KKT point,
then the minimal value (\ref{eq:cs-polyopt}) and (\ref{eq:pblmesparse}) are identical, i.e., $f_{\min}= f_c$.
\end{theorem}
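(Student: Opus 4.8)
The plan is to establish the equality by proving the two inequalities $f_c \le f_{\min}$ and $f_c \ge f_{\min}$ separately, the bridge in both directions being \Cref{thm:main}, which characterizes KKT points of \eqref{eq:genpolyopt1} through feasibility of the CS-LME system \eqref{eq:sub-KKTLME}.

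For $f_c \le f_{\min}$, I would start from a minimizer $x^*$ of \eqref{eq:genpolyopt1} which, by hypothesis, is a KKT point. Applying \Cref{thm:main} (the ``only if'' direction) produces a value of the additional variables $\nu = (\nu_{i,t,k})_{(i,t)\in A,\, k\in\cC_{i,t}}$ for which \eqref{eq:sub-KKTLME} holds for every $i \in [s]$. Unwinding the definitions of $\phi^{(i)}$ and $\psi^{(i)}$, this is exactly the statement that $\phi^{(i)}(x^{*(i)},\nu^{(i)}) = 0$ and $\psi^{(i)}(x^{*(i)},\nu^{(i)}) \ge 0$ for all $i$, i.e.\ $(x^*,\nu)$ is feasible for \eqref{eq:pblmesparse}. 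Since the objective of \eqref{eq:pblmesparse} depends only on $x$ and coincides with that of \eqref{eq:genpolyopt1}, its value at $(x^*,\nu)$ is $f(x^*) = f_{\min}$, whence $f_c \le f_{\min}$.

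For the reverse inequality, I would take an arbitrary feasible pair $(x,\nu)$ of \eqref{eq:pblmesparse}. The constraint $\psi^{(i)}(x^{(i)},\nu^{(i)}) \ge 0$ contains $g^{(i)}(x^{(i)}) \ge 0$ and $\phi^{(i)}(x^{(i)},\nu^{(i)}) = 0$ contains $h^{(i)}(x^{(i)}) = 0$, so $x$ is feasible for \eqref{eq:genpolyopt1} (indeed, by the ``if'' direction of \Cref{thm:main} it is even a KKT point). Hence $f_1(x^{(1)}) + \cdots + f_s(x^{(s)}) = f(x) \ge f_{\min}$; taking the infimum over all such $(x,\nu)$ gives $f_c \ge f_{\min}$, and combining with the previous paragraph yields $f_c = f_{\min}$.

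The argument is genuinely short, so there is no deep obstacle; the points that require care are bookkeeping ones. First, \Cref{thm:main} must be invoked so that a \emph{single} consistent assignment of all shared-variable parameters $\nu_{i,t,k}$ is obtained simultaneously across blocks --- this is precisely what \Cref{prop:1} guarantees through solvability of the incidence-matrix system \eqref{a:Bvb}. Second, the hypothesis that the minimum is attained at a KKT point is essential: it is what lets me plug an actual minimizer (rather than an infimizing sequence) into \Cref{thm:main}. No compactness or archimedean assumption enters at this stage; those are needed only later for convergence of the CS-SOS relaxations.
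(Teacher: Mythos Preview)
Your proposal is correct and follows essentially the same two-inequality argument as the paper: the paper obtains $f_c\le f_{\min}$ by noting that a KKT minimizer is feasible for \eqref{eq:pblmesparse} (via \Cref{thm:main}), and $f_c\ge f_{\min}$ from the containment of the feasible set of \eqref{eq:pblmesparse} in that of \eqref{eq:genpolyopt1}. Your additional remark that the ``if'' direction of \Cref{thm:main} even makes every feasible $x$ a KKT point is true but unnecessary for the inequality $f_c\ge f_{\min}$, since mere feasibility in \eqref{eq:genpolyopt1} already suffices.
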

\begin{remark}
Suppose the minimum value $f_{\min}$ is attainable.
If the nonsingularity condition holds for (\ref{eq:cs-polyopt}),
then $f_{\min}$ is attained at KKT points,
since the nonsinglarity implies the linear independence constraint qualification conditions (LICQ) hold on $\mathbb{C}^n$.
However, this is not necessarily true if we replace the nonsingularity condition of $(g,h)$ by that of every $(g^{(i)},h^{(i)})$, i.e., Assumption~\ref{ass:lme}, since \Cref{ass:lme} does not guarantee the LICQ to hold at every feasible point.
For such cases, $f_c$ may or may not equal $f_{\min}$.
Nevertheless, it does not mean the KKT conditions must fail at minimizers of (\ref{eq:cs-polyopt}) if the nonsingularity condition does not hold.
Indeed, it may happen that the constraining tuple is singular, but the LICQ condition holds at a minimizer, thus $f_c=f_{\min}$; see~\Cref{ep:box-ne}(ii), \Cref{ep:nonaive} and~\Cref{ep:ls}. 
\end{remark}


\section{Correlatively sparse LME based SOS relaxation}
\label{sc:CSSOS}

This section studies the correlatively sparse SOS relaxations for solving the CS-LME reformulation~\eqref{eq:pblmesparse}.

\subsection{RIP of the CS-LME reformulation}
First, we establish the RIP for \eqref{eq:pblmesparse}.
Recall that for each $i\in[s]$, the set of indices of variables $\hat{\cI}_i$ is given in (\ref{eq:hatI}).
\begin{lemma}\label{lemma:hatI}
The csp $(\hat{\cI}_1,\dots, \hat{\cI}_s)$ satisfies the RIP in \Cref{def:rip}.
\end{lemma}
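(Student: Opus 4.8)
The plan is to show that the new index sets $\hat{\cI}_i := \cI_i \cup \cJ_i$ inherit the running intersection property from the tree structure produced by Algorithm~\ref{A:TC}. The key observation is that $\cJ_i$ consists of labels $(i,t,k)$ or $(t,i,k)$ attached to \emph{arcs incident to node $i$} in the tree $(V,A)$, and each such label $k$ belongs to $\cC_{i,t} = \cI_i \cap \cI_t \subseteq \cI_i$. So the extra variables $\nu^{(i)}$ in block $i$ are in one-to-one correspondence with (arc, shared-variable) pairs touching $i$, and a variable $\nu_{i,t,k}$ appears in exactly the two blocks $i$ and $t$ that the arc $(i,t)$ connects. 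This locality is exactly what makes RIP survive.

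First I would fix the ordering: the tree $(V,A)$ has vertex set $\{1,\dots,s\}$ with arcs always pointing from a larger index to a smaller one (by construction in Algorithm~\ref{A:TC}, each arc $(i+1,t)$ has $t\le i$), so the original ordering $1,\dots,s$ is a valid elimination/RIP ordering witnessing that $(\cI_1,\dots,\cI_s)$ satisfies RIP via the parent map $\cA_i$. I would then verify RIP for $(\hat\cI_1,\dots,\hat\cI_s)$ with the same ordering, i.e.\ show that for each $i\in\{2,\dots,s\}$, $\hat\cI_i \cap \bigcup_{j<i}\hat\cI_j \subseteq \hat\cI_{\cA_i}$ where $\cA_i$ is the (unique) parent of $i$. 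Split the intersection into the original part ($\cI_i\cap\bigcup_{j<i}\cI_j$), mixed parts, and the new-variable part. The original part is contained in $\cI_{\cA_i}\subseteq\hat\cI_{\cA_i}$ by RIP of the original csp together with the fact that Algorithm~\ref{A:TC} picks $t$ precisely so that $\cI_i\cap\bigcup_{j<i}\cI_j\subseteq\cI_t$. For the new-variable part: a variable $\nu_{a,b,k}\in\hat\cI_i$ forces $\{a,b\}\ni i$ (it is in $\cJ_i$), and if it also lies in some $\hat\cI_j$ with $j<i$ then $\{a,b\}\ni j$ too, so $\{a,b\}=\{i,j\}$ with $j<i$; since arcs point downward this means the arc is $(i,j)$, i.e.\ $j=\cA_i$ is the parent of $i$, and then $\nu_{i,j,k}\in\cJ_{\cA_i}\subseteq\hat\cI_{\cA_i}$. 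The mixed cross terms (an original index of $\cI_i$ meeting a $\cJ_j$, or vice versa) cannot occur because $\cI_i$ and $\cJ_j$ live in disjoint "coordinate universes" ($x$-variables versus $\nu$-variables), so those intersections are empty.

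The main obstacle — really the only subtle point — is the new-variable case: making precise that a $\nu$-variable shared between block $i$ and an earlier block $j$ must sit on the tree arc from $i$ to its parent. This uses two facts established in the proof of Proposition~\ref{prop:1}: (a) for fixed $k$, the subgraph $(\cQ_k,\cP_k)$ of blocks containing $x_k$ is a connected subtree of $(V,A)$; and (b) arcs are oriented from larger to smaller index. Combining these, if $i>j$ and the arc between them carries label $k$, then in the rooted tree $i$ is a descendant whose parent edge goes toward $j$, forcing $j=\cA_i$. I would spell this out carefully and then assemble the three cases into the inclusion $\hat\cI_i\cap\bigcup_{j<i}\hat\cI_j\subseteq\hat\cI_{\cA_i}$, which is exactly the RIP condition \eqref{a:rin} for $(\hat\cI_1,\dots,\hat\cI_s)$.
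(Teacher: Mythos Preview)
Your proof is correct and follows essentially the same approach as the paper's: split $\hat\cI_{i}\cap\bigcup_{j<i}\hat\cI_j$ into the $\cI$-part and the $\cJ$-part (the mixed part being vacuous), show the $\cI$-part lands in $\cI_{\cA_i}$ by the construction of Algorithm~\ref{A:TC}, and show the $\cJ$-part lands in $\cJ_{\cA_i}$ because a $\nu$-label shared by blocks $i$ and $j<i$ must correspond to the arc $(i,j)\in A$, forcing $j=\cA_i$. One small remark: your invocation of fact (a) from Proposition~\ref{prop:1} (connectedness of the subtree $(\cQ_k,\cP_k)$) is unnecessary---the new-variable case needs only that elements of $\cJ_i$ are triples $(a,b,k)$ with $i\in\{a,b\}$ together with your fact (b); the paper's proof likewise uses only the orientation of arcs, and you should also make the case $\cA_i=\emptyset$ explicit (both intersections are then empty).
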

\begin{proof}
Note that
$$
\begin{aligned}
  & \cJ_{i+1}\bigcap \bigcup_{j=1}^i \cJ_j \\
=\  &  \left( \left\{(i+1,t,k): t\in \cA_{i+1}, k\in \cC_{i+1,t}\right\} \, \bigcup \, \left\{(t,i+1,k): t\in \cD_{i+1}, k\in \cC_{t,i+1}\right\}\right)\\ 
& \qquad\qquad \bigcap  \bigcup_{j=1}^i  
\left(\{(j,t,k): t\in \cA_j, k\in \cC_{j,t}\} \,\bigcup\, \{(t,j,k): t\in \cD_j,k\in \cC_{t,j}\}\right).
\end{aligned}
$$
Since $t\in\cD_{i+1}$ implies $t>i+1$, and $t\in \cA_j$ implies $t<j$, we have
$$
\begin{aligned}
\cJ_{i+1}\bigcap \bigcup_{j=1}^i \cJ_j=\ & \{(i+1,t,k): t\in \cA_{i+1}, k\in \cC_{i+1,t}\}\bigcap \\
   & \bigcup_{j=1}^i \left( \{(j,t,k): t\in \cA_j, k\in \cC_{j,t}\}\bigcup \{(t,j,k): t\in \cD_j,\, k\in \cC_{t,j}\}\right)\\
\subseteq\ & \{(i+1,t,k): t\in \cA_{i+1}, k\in \cC_{i+1,t}\} . 
\end{aligned}
$$
Let $ \cA_{i+1}=\{t\}$ for some $t\in [s]$. Then  $i+1\in \cD_t$ and so
\[\begin{aligned}
\cJ_t&=   \left\{(t,i,k): i\in \cA_{t}, k\in \cC_{t,i}\right\}\cup  
\left\{(i,t,k): i\in \cD_{t}, k\in \cC_{i,t}\right\}  \\ &
\supseteq \left\{(i+1,t,k): k\in \cC_{i+1,t}\right\}.
\end{aligned}
\]
Note that $\cI_i$ is the set of indices of variables $x^{(i)}$ and $\cJ_i$ is the set of indices of the auxiliary variables $\nu^{(i)}$. Hence $\cI_i \cap \cJ_j =\emptyset$ for each pair of $i,j\in [s]$. In particular, 
\begin{align*}
&\hat{\cI}_{i+1}\bigcap \bigcup_{j=1}^{i} \hat{\cI}_j  \\
= \quad &\left\{\mc{I}_{i+1} \bigcup \cJ_{i+1} \right\}\bigcap \left\{\bigcup_{j=1}^{i}  \left(\mc{I}_j \bigcup \cJ_j \right)\right\}\\
=\quad &\left\{\mc{I}_{i+1}\bigcap  \left\{\bigcup_{j=1}^{i}  \left(\mc{I}_j \bigcup \cJ_j \right)\right\} \right\}   \bigcup  \left\{\cJ_{i+1} \bigcap \left\{\bigcup_{j=1}^{i}  \left(\mc{I}_j \bigcup \cJ_j \right)\right\} \right\}\\
=\quad &\left\{\mc{I}_{i+1}\bigcap \bigcup_{j=1}^{i} \mc{I}_j\right\} \bigcup \left\{ \cJ_{i+1}\bigcap \bigcup_{j=1}^{i} \cJ_j\right\}.
\end{align*}
Therefore, we have
\begin{align*}
\hat{\cI}_{i+1}\bigcap \bigcup_{j=1}^{i} \hat{\cI}_j  
=\left\{\mc{I}_{i+1}\bigcap \bigcup_{j=1}^{i} \mc{I}_j\right\} \bigcup \left\{ \cJ_{i+1}\bigcap \bigcup_{j=1}^{i} \cJ_j\right\}
\subseteq \mc{I}_t \cup \cJ_t= \hat{\cI}_t.
\end{align*}

\end{proof}

\subsection{Convergence of the CS-LME based SOS relaxation}
For the polynomial optimization problem \eqref{eq:pblmesparse} with the csp~$({\hat\cI_1}, \ldots, \hat{\cI}_s)$,
the $d$th order correlatively sparse SOS relaxation is
\begin{equation}\label{eq:msoscslme}
\qquad
\left\{\begin{array}{lll}
\vartheta_{d}:=&\displaystyle\max &\gamma\\
&\enspace \mathrm{s.t.}   & {\displaystyle\sum_{i=1}^s f_i-\gamma \in  \displaystyle \sum_{i=1}^s\IQ_{\hat\cI_i} \left( \phi^{(i)}, \psi^{(i)}\right)_{2d}}.
\end{array}\right.
\end{equation}
Note that for each $i\in [s]$,  $h^{(i)}$ is contained in $\phi^{(i)}$, $g^{(i)}$ is contained in $\psi^{(i)}$,
and  $\cI_i \subseteq \hat{\cI}_i$. 
It follows that
$$
{\sum_{i=1}^s\IQ_{\hat\cI_i} \left( h^{(i)}, g^{(i)}\right)_{2d} \subseteq
\sum_{i=1}^s\IQ_{\hat\cI_i} \left( \phi^{(i)}, \psi^{(i)}\right)_{2d}}.
$$
Therefore,~\eqref{eq:msoscslme} is a tighter relaxation than~\eqref{eq:msoscs}. In particular, we have
\begin{align}\label{a:rhold}
\vartheta_d \geq \rho_d,\enspace \forall d\ge d_0.
\end{align}
\begin{theorem}
\label{tm:asymconv}
Assume the following
\begin{enumerate}
\item  at least one minimizer of~\eqref{eq:cs-polyopt} is a KKT point, and
\item for each $i\in [s]$, $\IQ_{{\cI_i}}\left(h^{(i)}, g^{(i)}\right)$ is archimedean.
\end{enumerate}
Then  
\begin{equation}\label{eq:rhold}
\lim_{d\rightarrow +\infty}\vartheta_d =f_{\min}.
\end{equation}
\end{theorem}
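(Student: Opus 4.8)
The plan is to trap $\vartheta_d$ between the plain correlative sparse bound $\rho_d$ and the global minimum $f_{\min}$, so that convergence of the weaker hierarchy $\rho_d$ forces convergence of $\vartheta_d$. This route avoids having to analyse the quadratic modules $\IQ_{\hat{\cI}_i}(\phi^{(i)},\psi^{(i)})$ of the reformulation at all.

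First I would prove the upper bound $\vartheta_d\le f_{\min}$. Write $f_c$ for the optimal value of the CS-LME reformulation~\eqref{eq:pblmesparse}. Every element of $\IQ_{\hat{\cI}_1}(\phi^{(1)},\psi^{(1)})+\dots+\IQ_{\hat{\cI}_s}(\phi^{(s)},\psi^{(s)})$ is nonnegative on the feasible set of~\eqref{eq:pblmesparse}, so any $\gamma$ admissible in~\eqref{eq:msoscslme} obeys $\gamma\le f_c$; hence $\vartheta_d\le f_c$ for all $d$. By Assumption~1 the minimum of~\eqref{eq:genpolyopt1} is attained at a KKT point, and then the theorem in Section~\ref{sc:CSLME} asserting $f_{\min}=f_c$ under exactly this hypothesis gives $\vartheta_d\le f_{\min}$. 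This is the only use of Assumption~1, and it is indispensable: without it one would only know $\vartheta_d\le f_c$, and $f_c$ need not equal $f_{\min}$.

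Next I would bring in the lower bound and the known limit. The inclusion of quadratic modules established right before the theorem gives $\vartheta_d\ge\rho_d$ for every $d\ge d_0$ --- this is~\eqref{a:rhold} --- where $\rho_d$ is the optimal value of the plain CS-SOS relaxation $\bQ_d^{cs}$ of~\eqref{eq:genpolyopt1}. Assumptions~2 and~3 are precisely the hypotheses of the sparse Putinar Positivstellensatz (Theorem~\ref{th:sparseputinar}) for~\eqref{eq:genpolyopt1}, so $\lim_{d\to\infty}\rho_d=f_{\min}$, which is~\eqref{a:rhodlmt}. Putting the pieces together, $\rho_d\le\vartheta_d\le f_{\min}$ with $\rho_d\to f_{\min}$ forces $\lim_{d\to\infty}\vartheta_d=f_{\min}$, which is~\eqref{eq:rhold}.

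I do not anticipate a genuine obstacle along this route: it is a two-sided estimate followed by a squeeze, and all three ingredients are already available --- note in particular that the RIP of the enlarged pattern $(\hat{\cI}_1,\dots,\hat{\cI}_s)$ from the preceding lemma enters only through~\eqref{a:rhold}, and that one never needs $\IQ_{\hat{\cI}_i}(\phi^{(i)},\psi^{(i)})$ to be archimedean. The only routine points to check are that $\vartheta_d$ is finite for $d$ large enough (the feasible set of~\eqref{eq:pblmesparse} is nonempty by Assumption~1 together with Theorem~\ref{thm:main}, and $\vartheta_d\ge\rho_d$, which is finite for large $d$) and that Assumption~\ref{ass:lme}, standing throughout Section~\ref{sc:CSLME}, makes the CS-LMEs $p^{(i)}$ well defined so that~\eqref{eq:pblmesparse} and~\eqref{eq:msoscslme} are meaningful. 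A more ambitious alternative --- which I would not pursue here --- is to prove directly that each $\IQ_{\hat{\cI}_i}(\phi^{(i)},\psi^{(i)})$ is archimedean, the delicate step being to bound the auxiliary variables $\nu^{(i)}$, using the components of the gradient equation inside $\phi^{(i)}$ indexed by the interface variables $k\in\cC_{i,t}$ to express $\nu^{(i)}$ polynomially in $x^{(i)}$ on the feasible set, and then to apply Theorem~\ref{th:sparseputinar} to~\eqref{eq:pblmesparse} itself; the sandwich argument sidesteps this entirely.
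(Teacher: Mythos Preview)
Your proposal is correct and follows essentially the same route as the paper: bound $\vartheta_d$ above by $f_c$ (hence by $f_{\min}$ via Assumption~1 and the theorem $f_{\min}=f_c$), bound it below by $\rho_d$ via the inclusion of quadratic modules~\eqref{a:rhold}, and squeeze using $\rho_d\to f_{\min}$ from~\eqref{a:rhodlmt}. One small inaccuracy in your commentary: the RIP of the enlarged pattern $(\hat{\cI}_1,\dots,\hat{\cI}_s)$ does not actually enter through~\eqref{a:rhold} --- that inclusion follows purely from $h^{(i)}\subseteq\phi^{(i)}$, $g^{(i)}\subseteq\psi^{(i)}$, $\cI_i\subseteq\hat{\cI}_i$ --- and in fact is not used anywhere in this proof.
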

\begin{proof}
By the definition of CS-SOS relaxation, we have
$$
\vartheta_d\leq f_c,\enspace \forall d\in \bN.
$$
The first condition, together with~\Cref{thm:main}, implies that $f_c=f_{\min}$. 
Then we have $\vartheta_d\leq f_{\min}$,
and the convergence follows directly by (\ref{a:rhodlmt}) and~\eqref{a:rhold}.
\end{proof}
{
\begin{remark}\label{r:arche}
In Theorem~\ref{tm:asymconv},
if we substitute the  condition that $\IQ_{{\cI_i}}\left(h^{(i)}, g^{(i)}\right)$ is archimedean by the archimedeanness of $\IQ_{{\hat{\cI}_i}}\left(\phi^{(i)}, \psi^{(i)}\right)$,
then the conclusion still holds.
However,  $\IQ_{{\hat{\cI}_i}}\left(\phi^{(i)}, \psi^{(i)}\right)$ is not archimedean in general,
even if $\IQ_{{{\cI}_i}}\left(h^{(i)}, g^{(i)}\right)$ is archimedean. 
To see this, consider the CS-LME reformulation (\ref{eq:ball-cslme-reform}) for the optimization problem in \Cref{ep:box}. 
In (\ref{eq:ball-cslme-reform}),
tuples $h^{(1)},g^{(1)},h^{(2)},g^{(2)}$ are given by (\ref{eq:ball-csp}),
and it is clear that both $\IQ_{{{\cI}_1}}\left(h^{(1)}, g^{(1)}\right)$ and $\IQ_{{{\cI}_2}}\left(h^{(2)}, g^{(2)}\right)$ are archimedean.
Moreover,  $(\phi^{(1)}, \psi^{(1)})$ corresponds to the first two constraints in~\eqref{eq:ball-cslme-reform},
and $(\phi^{(2)}, \psi^{(2)})$ is given by the last two constraints in~\eqref{eq:ball-cslme-reform}. 
For any fixed $\nu\in \R$, consider the following polynomial optimization problem in variables $(x_1,x_2)$:
\begin{equation}\label{eq:ex1109}
\left\{\begin{array}{lll}
&\displaystyle\min ~~~ &f_1(x_1,x_2)+ \nu x_2\\
&\enspace \mathrm{s.t.} \quad  &
1-x_1^2 -x_2^2\geq 0.
\end{array}\right.
\end{equation}
Then one may check that $(x_1,x_2,\nu)\in \left\{z^{(1)}\in\re^3: \phi^{(1)}(z^{(1)})=0, \psi^{(1)}(z^{(1)})\ge0\right\}$ if and only if $(x_1,x_2)$ is a KKT point for \eqref{eq:ex1109}.
Since~\eqref{eq:ex1109} has a compact feasible set, and the constraint qualification condition holds at all feasible points, ~\eqref{eq:ex1109} has a KKT point for any $\nu\in\re$.
This implies that the semialgebraic set $$\left\{z^{(1)}\in\re^3: \phi^{(1)}(z^{(1)})=0, \psi^{(1)}(z^{(1)})\ge0\right\}$$ is unbounded,
and thus $\IQ_{{\hat{\cI}_1}}\left(\phi^{(1)}, \psi^{(1)}\right)$ is not archimedean.
Similarly, one can also show that $\IQ_{{\hat{\cI}_2}}\left(\phi^{(2)}, \psi^{(2)}\right)$ is not archimedean neither.
\end{remark}
\begin{remark}\label{r:posearchi}
The archimedean condition of $\IQ_{{\cI_i}}\left(h^{(i)}, g^{(i)}\right)$ for each $i\in [s]$ is also required in~\Cref{th:sparseputinar} to ensure the convergence of the CS-SOS relaxation. 
We wish to point out that this archimedean condition is not required for obtaining the CS-LMEs~\eqref{a:pilme} and the CS-LME reformulation~\eqref{eq:pblmesparse}.
There may exist polynomial optimization problems with compact feasible sets, for which,
however, $\IQ_{{{\cI}_i}}\left(h^{(i)}, g^{(i)}\right)$ is not archimedean for some $i\in[s]$ (e.g.,~\Cref{ep:box} and~\Cref{ex:box10}).
For such cases, one may add redundant constraints to $g^{(i)}$ to obtain the archimedeanness.
Such a redundant constraint can either be a replication of existing constraints,
or be the ball constraint as $ M-\Vert x^{(i)}\Vert^2\ge0$ if an {\it a priori}  bound $M$ is known.\footnote{It is important to note that there are two ways to replicate existing constraints.
For the constraint $g_j\in\re[x^{(i)}]$ that is not assigned to $g^{(i)}$,
we may add its replication to $g^{(i)}$ and obtain a new constraining tuple $\hat{g}^{(i)}$, then consider the KKT system and construct CS-LMEs for $\hat{g}^{(i)}$, as long as the new constraining tuple $\hat{g}^{(i)}$ is also nonsingular.
On the other hand, one may add $g_j$ to $\psi^{(i)}$ in the CS-LME reformulation.
These two ways produce different CS-LME reformulations with identical optimal values,
since the former may get different CS-LMEs from the original problem.
However, if we add a redundant ball constraint $ M-\Vert x^{(i)}\Vert^2\ge0$ which can never be active (e.g., let $M:=n_i\cdot\hat{M}$ with $\hat{M}>\Vert x^{(i)}\Vert_{\infty}$, thus its Lagrange multiplier must be $0$), then these two ways are equivalent.}
However, adding redundant constraints is inconvenient and usually unnecessary in practice.
Indeed, 
even if the archimedean conditions fail to hold (or, further, if the feasible set of \eqref{eq:cs-polyopt} is unbounded),
we can still formulate and solve the CS-LME reformulations with CS-SOS relaxations. 
In practical computation, finite convergence is observed numerically with a low relaxation order for solving CS-LME reformulations,
regardless of whether the archimedean condition for $\IQ_{{\cI_i}}\left(h^{(i)}, g^{(i)}\right)$ holds or not.
We refer to~\Cref{sc:ne} for examples where the archimedean condition is not satisfied,
while our approach can still find global minimum successfully. 
\end{remark}

\subsection{Comparison of the SDP problem scale}\label{subsec:cwlme}
In this section, we compare the scale of the corresponding SDP problems in different relaxation approaches. 
We assume that  the functions $f_1\in \R[x^{(1)}],\ldots,f_s\in \R[x^{s}]$ are all dense polynomials and both LMEs and CS-LMEs exist for~\eqref{eq:cs-polyopt}.  For the convenience of reference,
we nominate the four approaches for solving~\eqref{eq:cs-polyopt} as follows:
\begin{flushleft}
\begin{tabular}{p{0.135\textwidth} p{0.8\textwidth}}
 (SOS): &  Applying the dense SOS relaxation to~\eqref{eq:cs-polyopt}; \\
 (CS-SOS): &  Applying the CS-SOS relaxation to~\eqref{eq:cs-polyopt};\\
(LME): &  Applying the CS-SOS relaxation to the LME reformulation~\eqref{eq:pblme}; \\
(CS-LME): &  Applying the CS-SOS relaxation to the CS-LME reformulation~\eqref{eq:pblmesparse}. 
\end{tabular}
\end{flushleft}

We first consider the two-block cases.
Denote by $k\deq |\cC_{1,2}|$ the number of overlapping elements in $\cI_1$ and $\cI_2$. Then $|\cI_1 \cup \cI_2|=n_1+n_2-k$ is the total number of variables.
The CS-LME reformulation (\ref{eq:pblme-2}) has the csp $(\hat{\cI}_1,\hat{\cI}_2)$ such that $|\hat{\cI}_1|=n_1+k $ and $ |\hat{\cI}_2|=n_2+k.$ In~\Cref{tab:cmax}, we compare the maximal size of the positive semidefinite (PSD) matrices appearing in the SDP formulation of the four relaxation methods. 
\begin{table}[htb]
\centering
\caption{The maximal PSD matrix size in the $d$th order relaxation of the four  methods  when $s=2$. }
\begin{tabular}{c|l}  \hline
Relaxation approach  &   Maximal PSD matrix size in $d$th order relaxation \\
 \hline 
 SOS  &  $\textstyle\binom{n_1+n_2-k+d}{d} \times \binom{n_1+n_2-k+d}{d}$ \\
 \hline 
 CS-SOS &  $\textstyle\binom{\max\{n_1,n_2\}+d}{d}\times  \binom{\max\{n_1,n_2\}+d}{d}$ \\
 \hline 
 LME &  $\textstyle\binom{n_1+n_2-k+d}{d} \times  \binom{n_1+n_2-k+d}{d} $ \\
 \hline 
 CS-LME &  $\textstyle\binom{\max\{n_1,n_2\}+k+d}{d} \times \binom{\max\{n_1,n_2\}+k+d}{d} $ \\
\end{tabular}\label{tab:cmax}
\end{table}
In~\Cref{tab:B1B2}, we display the values of the binomial numbers in~\Cref{tab:cmax} for some examples of $n_1,n_2,k,d$.
\begin{table}[htb]
\centering
\caption{For each $n_1,n_2, k$, and $d$, we display  sequentially the  four binomial values appearing in~\Cref{tab:cmax}: $\binom{n_1+n_2-k+d}{d}$ for SOS, $\binom{\max\{n_1,n_2\}+d}{d}$ for CS-SOS, $\binom{n_1+n_2-k+d}{d}$ for LME, and $\binom{\max\{n_1,n_2\}+k+d}{d}$ for CS-LME.}
\begin{tabular}{c|c|c|c}
\hline 
$(n_1,n_2,k)$& $d=2$ & $d=3$ & $d=4$ \\
\hline
\scriptsize
$(4,3,1)$ &  \scriptsize $(28, 15, 28, 21)$  & \scriptsize (84, 35, 84, 56) & \scriptsize (210, 70, 210, 126) \\
\hline
\scriptsize $(5,5,2)$ & \scriptsize (45, 21, 45, 36) & \scriptsize (165, 56, 165, 120) & \scriptsize (495, 126, 495, 330)  \\
\hline
\scriptsize $(10,10,2)$ & \scriptsize  (190, 66, 190, 91)& \scriptsize (1330, 286, 1330, 455) & \scriptsize (7315, 1001, 7315, 1820) \\
\hline
\scriptsize $(15,15,3)$ & \scriptsize(406, 136, 406, 190) & \scriptsize(4060, 816, 4060, 1330) &\scriptsize (31465, 3876, 31465, 7315) \\
\hline
\scriptsize $(20,20,5)$ & \scriptsize (666, 231, 666, 351) & \scriptsize (8436, 1771, 8436, 3276) & \scriptsize (82251, 10626, 82251, 23751)
\end{tabular}
\label{tab:B1B2}
\end{table}

From~\Cref{tab:cmax} and~\Cref{tab:B1B2}, we conclude that for the same order of relaxation, the smallest scale SDP problem is given by CS-SOS. On the other hand, CS-SOS may need higher relaxation order $d$ to converge than the other three methods. When $s=2$, the complexity growth of the LME approach is the same as that of the SOS approach. Thus, despite its potentially faster convergence speed, the LME approach suffers from the same rapid complexity growth just as the dense SOS approach.   
In contrast, 
our CS-LME approach leads to SDP problems of a scale comparable with that of CS-SOS, and thus enjoys a less aggressive complexity growth. 
Meanwhile, it is expected to converge faster than CS-SOS as it incorporates the first-order optimality condition in the relaxation just as the LME approach, as shown in \Cref{sc:ne}.

In the above, we compared the maximal PSD matrix size in the SDP problems arising from different relaxation approaches when $s=2$.
To examine the number and size  of  all the PSD matrices in the SDP problems, one needs, in addition, the structure information of the functions $(f, g, h)$.  
The next example compares the SDP problem scale in detail for a box-constrained problem with a quadratic objective function. 
\begin{example}\label{ep:2-block}
Let $N$ and $k$ be positive integers and
\[\cI_1=\left\{1\ddd N\right\},\quad \cI_2=\left\{N+1-k\ddd 2N-k\right\}.\]
Note that this is a special two-block case  with $n_1=n_2=N$. 
Consider problem~\eqref{eq:cs-polyopt} with this csp $(\cI_1,\cI_2)$
and box constraints
\be\label{eq:gex3}\begin{array}{c}
\displaystyle g^{(1)}=(x_1,\,1-x_1\ddd \, x_{N-k},\,1-x_{N-k}),\\
\displaystyle g^{(2)}=(x_{N+1-k},\,1-x_{N+1-k}\ddd \, x_{2N-k},\,1-x_{2N-k}).
\end{array}\ee
The LMEs and CS-LMEs can be similarly given as in (\ref{eq:boxlme}) and (\ref{eq:box-cslme}), respectively, and we omit explicit expressions of them for the cleanness of this paper. 
Let $f_1$ and $f_2$ be quadratic functions. We present in~\Cref{tab:call} the number and size of all the PSD matrices in the four different approaches.
\begin{table}[htb]
\centering
\caption{Size and number of PSD matrices in the $d$th order relaxation of the four methods for the box constrained problem~\eqref{eq:gex3} with quadratic objective functions. }
\begin{tabular}{c|l}  \hline
{Relaxation }  &  Size and number of PSD matrices size  \\
approach &  in the $d$th order relaxation\\
 \hline 
 \multirow{2}{*}{SOS} &   one PSD matrix of size $\textstyle\binom{2N-k+d}{d} \times \binom{2N-k+d}{d}$, \\
  & $4N-2k$ PSD matrices of size  
  $\textstyle\binom{2N-k+d-1}{d-1} \times \binom{2N-k+d-1}{d-1}$.\\
 \hline 
 \multirow{2}{*}{CS-SOS} &  two PSD matrices of size $\textstyle\binom{N+d}{d}\times  \binom{N+d}{d}$,   \\
 &  $4N-2k$ PSD matrices of size $\textstyle\binom{N+d-1}{d-1}\times  \binom{N+d-1}{d-1}$.\\
 \hline 
 \multirow{2}{*}{LME} & one PSD matrix of size  $\textstyle\binom{2N-k+d}{d} \times  \binom{2N-k+d}{d} $, \\ 
 & $8N-4k$ PSD matrices of size  $\textstyle\binom{2N-k+d-1}{d-1} \times  \binom{2N-k+d}{d} $. \\
 \hline 
 \multirow{2}{*}{CS-LME} &   two PSD matrices of size $\textstyle\binom{N+k+d}{d} \times \binom{N+k+d}{d} $, \\ 
 &  $8N-4k$ PSD matrices of size 
 $\textstyle\binom{N+k+d-1}{d-1} \times \binom{N+k+d-1}{d-1}. $
\end{tabular}\label{tab:call}
\end{table}
\Cref{tab:call1010} is an instantiation of the numbers in~\Cref{tab:call} for the special case when $N=10$ and $k=2$. 
\begin{table}[htb]
\centering
\caption{Instantiation of~\Cref{tab:call} when $N=10$ and $k=2$. For example, the bottom-right block reads as follows: the $4$th order relaxation of the CS-LME approach corresponds to an SDP problem with two 1820-by-1820 PSD matrices and seventy-two 455-by-455 PSD matrices.   }
\begin{tabular}{c|c|c|c}  \hline
Relaxation  &   \multirow{2}{*}{ $d=2$} & \multirow{2}{*}{$d=3$} & \multirow{2}{*}{$d=4$} \\
approach & & &\\
 \hline 
 {SOS} &  $\left(1, 190 \right)$, $\left(36, 19\right)$ 
 &  $\left(1, 1330 \right)$, $\left(36, 190 \right)$ &  $\left(1, 7315 \right)$, $\left(36, 1330 \right)$
\\
 \hline 
{CS-SOS} &  $\left(2, 66 \right)$,  $\left(36, 11 \right)$ &  $\left(2, 286 \right)$, $\left(36, 66 \right)$ & $\left(2, 1001 \right)$, $\left(36, 286 \right)$  \\
 \hline 
 {LME} & $\left(1, 190 \right)$, $\left(72, 19 \right)$  & $\left(1, 1330 \right)$, $\left(72, 190 \right)$ &$\left(1, 7315\right)$, $\left(72, 1330 \right)$\\ 
 \hline 
 {CS-LME} &   $\left(2, 91\right)$, $\left(72, 13 \right)$ &   $\left(2, 455 \right)$, $\left(72, 91 \right)$   &   $\left(2, 1820 \right)$, $\left(72, 455 \right)$  
\end{tabular}\label{tab:call1010}
\end{table}
\end{example}

Now we consider general multiblock cases. 
If there exists a common variable in all the blocks, i.e., if there is some $j\in[n]$ such that $j\in\cI_i$ for all $i\in[s]$ (e.g.,  $s=2$ or~\Cref{ex:5balls}),
then the LME reformulation does not have correlative sparsity. 
In this case, the SDP problem scale of the LME approach grows similarly to that of the dense SOS relaxations. 
However, in general,
though the LME reformulation usually breaks the csp of the original problem,
it may  have a weaker correlative sparsity.
The following example is such an exposition.

\begin{example}
\label{ep:cspep}
Let $N>k$ be two positive integers.
Consider the following csp
\begin{equation}\label{eq:ex4}
\mc{I}_i=\left\{(N-k)(i-1)+1,\ldots,(N-k)(i-1)+N\right\},\enspace \forall i=1,\ldots,s.
\end{equation}
When $N=3$ and $k=2$, it corresponds to the csp of the Broyden tridiagonal function~\cite[Example 3.4]{Lasserre06}.
The {directed clique} tree $(V,A)$ associated to the sparsity pattern~\eqref{eq:ex4} is given by
$$
A=\{(i, {i-1}): i=2,\ldots,s\}.
$$
For each arc $(i, {i-1})\in A$, the set of joint indices is
$$
\cC_{i,i-1}=\cI_i \cap \cI_{i-1}=\{(N-k)(i-1)+1,\ldots,(N-k)(i-2)+N\}.
$$
Note that $|\cI_i|=N$ and $|\cC_{i,i-1}|=k$ for each $i\in [s]$.
The auxiliary variables are
\be\label{eq:chainnu}
\bigcup_{i=2}^s \bigcup_{j=1}^k \big\{
\nu_{i,i-1,(N-k)(i-1)+j}
\big\}.
\ee

\begin{table}[ht!]
\centering
\caption{The maximal PSD matrix size in $d$th order relaxation of the four  methods  when the csp is given by~\eqref{eq:ex4}. }
\begin{tabular}{c|l}  \hline
Relaxation  &  \multirow{2}{*}{ Maximal PSD matrix size in $d$th order relaxation }\\ 
approach & \\
 \hline 
 SOS  &  $\textstyle\binom{(N-k)(s-1)+N+d}{d} \times \binom{(N-k)(s-1)+N+d}{d}$ \\
 \hline 
 CS-SOS &  $\textstyle\binom{N+d}{d}\times  \binom{N+d}{d}$ \\
 \hline 
 LME &  $\textstyle\binom{(N-k)\left\lfloor \frac{N-1}{N-k} \right\rfloor+N+d}{d} \times \binom{(N-k)\left\lfloor \frac{N-1}{N-k} \right\rfloor+N+d}{d} $ \\
 \hline 
 CS-LME &  $\textstyle\binom{2k+N+d}{d} \times  \binom{2k+N+d}{d} $ \\
\end{tabular}\label{tab:cNks}
\end{table}
For the sparsity pattern~\eqref{eq:ex4}, the maximal clique size in the csp graph of the CS-LME reformulation~{\eqref{eq:pblmesparse}} is 
$$
N+2k.
$$  
In contrast, 
the maximal clique size in the original LME reformulation~\eqref{eq:pblme} is 
$$
(N-k)\left\lfloor \frac{N-1}{N-k} \right\rfloor+N . 
$$
We give in~\Cref{tab:cNks} the maximal PSD matrix size of the four methods for solving~\eqref{eq:cs-polyopt} with csp given by~\eqref{eq:ex4}.
\Cref{tab:cNks} shows that the SDP problem scale of CS-LME  is significantly smaller than SOS and LME  when  
$N\gg k$. Recall that $N$ is the size of the blocks while $k$ is the number of overlapping variables between two successive blocks. Thus $N/k$ can be seen as a measure of the partial separability of the problem. We speculate that the larger $N/k$ is,  the more efficient the CS-LME approach is compared with the other three approaches\footnote{The overall performance depends on both the SDP problem scale and the convergence rate with respect to the relaxation order $d$.  }.
See~\Cref{ep:uncons} for a numerical evidence with $N=15$,  $k=2$, and $s=10$. 
\end{example}
\begin{remark}\label{rem:csnl}To end this section, we would like to point out that for small-scale problems, the LME approach has outstanding performance, especially when  the  SOS approach  cannot find the global minimum with a low relaxation order; see~\cite{nie2019tight}.
For small-scale problems with csp, the LME approach may still be faster than the CS-LME approach because the latter needs to add auxiliary variables to maintain the csp. See~\Cref{ep:box-ne} for a numerical example of a small-scale problem. 

In general, we expect CS-LME to perform better than the other three approaches when the cliques in the csp graph  of the CS-LME reformulation are not much larger than that of the LME reformulation.  Since 
$
|\hat \cI_i |=|\cI_i|+ |\cJ_i|
$,  this occurs when 
\begin{enumerate}
\item  the number of overlapping variables between any two blocks $\cI_i$ and $\cI_j$ is small;
\item  each node in the directed clique tree $G(V,A)$ returned by~\Cref{A:TC}  has a small number of children. 
\end{enumerate}
These two conditions ensure that only a small number of auxiliary variables  $|\cJ_i|$ must be added to each block.  
\end{remark}

\section{Numerical experiments}
\label{sc:ne}
In this section,
we present numerical experiments that apply CS-LMEs to solve polynomial optimization problems with a given csp.
We directly call the software {\tt TSSOS} \footnote{\url{https://github.com/wangjie212/TSSOS}} \cite{TSSOS,CSTSSOS} to solve the CS-TSSOS relaxation of the CS-LME reformulation~\eqref{eq:pblmesparse}.
Note that CS-TSSOS relaxation exploits both correlative and term sparsity in the polynomial optimization problem.
{As recalled in Section~\ref{sc:cssos},
the convergence of CS-TSSOS is guaranteed when the CS-SOS relaxation is convergent (with option {\tt TS={"block"}}).}
The software {\tt Mosek} is applied to solve the SDPs with default settings.
The computation was implemented on {a} Lenovo x1 Yoga laptop,
with an Intel Core i7-1185G7 CPU at 3.00GHz$\times $4 cores and 16GB of RAM,
on the Windows 11 operating system.

For all polynomial optimization problems in this section,
we compare the performance of several approaches.
First, we solve the problem directly by {\tt CS-TSSOS} with options {\tt TS={"block"}} and {\tt TS={"MD"}}, respectively (see \cite{TSSOS} for more details).
Then, we solve the LME reformulation~\eqref{eq:pblme} introduced in \cite{nie2019tight} when it exists.
Note that when original LMEs are applied, correlative sparsity for the reformulation is usually corrupted.
Last, we solve the CS-LME reformulation~\eqref{eq:pblmesparse}.
For both LME reformulation (\ref{eq:pblme}) and CS-LME reformulation (\ref{eq:pblmesparse}),
the {\tt CS-TSSOS} is called with the option {\tt TS={"MD"}}.
Besides that, we use MATLAB software {\tt Gloptipoly 3} \cite{henrion2009gloptipoly} to implement dense relaxations with {\tt Mosek} being applied to solve the SDPs.
{We say a relaxation \textit{``fail to solve''} when we cannot get a sensible optimal value for it.
This is the case when we suspect SDP is unbounded as {\tt Mosek} reaches a negative objective value with a huge absolute value ($<-10^6$).}
\begin{example}
\label{ep:box-ne}
(i) Consider the polynomial optimization problem (\ref{eq:box}) in \Cref{ep:box}.
As mentioned in \Cref{ep:box}, its global minimum equals $0$.
The CS-LMEs for this problem are given by (\ref{eq:box-cslme}),
and the CS-LME reformulation is (\ref{eq:box-cslme-reform}).
{One may check that the archimedean condition is not satisfied by $\IQ_{\cI_1}(h^{(1)},g^{(1)})$.}
Besides that, the LME is given by (\ref{eq:boxlme}).
Numerical results for solving this problem are presented in \Cref{tab:simp}.
In the table, 
``$d$'' means the relaxation order,
``$l$'' represents the term sparsity level.
The columns ``no LME+{\tt block}'' and ``no LME+{\tt MD}'' are numerical results of applying {\tt CS-TSSOS} directly to the polynomial optimization problem with {\tt TS={"block"}} and {\tt TS={"MD"}} respectively,
the column ``LME'' corresponds to solving the  LME reformulation,
and the column ``CS-LME'' represents the relaxation results of the CS-LME reformulation.
The ``error'' is the absolute value of the difference of optimal value for this polynomial optimization problem and the approximation computed by the semidefinite relaxation,
and ``time'' is the time consumption in seconds for computing this approximation.
When a superscript $^*$ is marked, it means this lower bound was computed with the highest level of term sparsity within the current relaxation order.

From the table, one can see that when there were no LMEs exploited,
{\tt CS-TSSOS} could not get an approximation for the global minimum of this problem with high accuracy (say, the error is less than $10^{-6}$).
Particularly, when $d=3$, the computed optimal values for both ``no LME+{\tt block}'' and ``no LME+{\tt MD}'' are less than $-10^{13}$, and we marked ``fail to solve'' in the table.
{Besides that, when $d=3$, {\tt Gloptipoly 3} failed to solve the problem (unboundedness suspected),
and obtained an approximated value with error equaling $3\cdot 10^{-9}$ in 0.50 second when $d=4$.}
In contrast, the LME approach took around 0.23 second to get the approximated global minimum,
and the CS-LME approach obtained the approximated minimum in 0.53 second.

(ii) For the polynomial optimization problem in~\Cref{ep:boxexplain},
if we keep the objective function and the csp, but change the constraints to
\[\begin{array}{l}g^{(1)}(x^{(1)})=\left(1-{x^{(1)}}^Tx^{(1)},\, x^{(1)}_1,\, x^{(1)}_2\right),\quad  
  g^{(2)}(x^{(2)})=\left(1-{x^{(2)}}^Tx^{(2)},\, x^{(2)}_1,\, x^{(2)}_2\right)\end{array},\]
then the CS-LME becomes
\[\begin{array}{c}
\displaystyle \lmd^{(1)}_1 = -\frac{1}{2}{{x^{(1)}}^{\top}F^{(1)}},\quad \lmd^{(1)}_2 = F^{(1)}_1+2x^{(1)}_1\lmd^{(1)}_1,\quad \lmd^{(1)}_3 = F^{(1)}_2+2x^{(1)}_2\lmd^{(1)}_1, \\
\displaystyle \lmd^{(2)}_1 = -\frac{1}{2}{{x^{(2)}}^{\top}F^{(2)}},\quad \lmd^{(2)}_2 = F^{(2)}_1+2x^{(2)}_1\lmd^{(2)}_1,\quad \lmd^{(2)}_3 = F^{(2)}_2+2x^{(2)}_2\lmd^{(2)}_1.
\end{array}\]
However, one may check this problem does not have LMEs.
\begin{table}[ht!]
\small
\centering
\caption{Numerical results for~\Cref{ep:box-ne}(i)}
\begin{tabular}{cc|cc|cc|cc|cc}  \hline
\multirow{2}{*}{$d$} & \multirow{2}{*}{$l$}  & \multicolumn{2}{c|}{no LME+{\tt block}} & \multicolumn{2}{c|}{no LME+{\tt MD}} & \multicolumn{2}{c|}{LME} & \multicolumn{2}{c}{CS-LME}\\ \cline{3-10}
& & error & time &  error & time &  error & time &  error & time \\ \hline
3 & 1 & \multicolumn{2}{c|}{fail to solve} & \multicolumn{2}{c|}{fail to solve} & \multicolumn{2}{c|}{not defined} & \multicolumn{2}{c}{not defined}\\
3 & 2 & \multicolumn{2}{c|}{$^*$fail to solve} & \multicolumn{2}{c|}{fail to solve} &  &  & \\
3 & 3 & & & \multicolumn{2}{c|}{fail to solve} &  &  & \\
3 & 4 & & & \multicolumn{2}{c|}{$^*$fail to solve} &  &  & \\\hline
4 & 1 & $0.0134$ & 0.06s & $0.0437$ & 0.03s & $2\cdot 10^{-8}$  & 0.23s & $0.0014$  & 0.36s\\
4 & 2 & $^*0.0134$ & 0.07s & $0.0437$ & 0.03s &  &  & $\mathbf{1\cdot 10^{-7}}$ & {\bf 0.53s} \\
4 & 3 &  &  & $0.0140$ & 0.13s & && \\
\vdots &\vdots & \vdots &\vdots &\vdots &\vdots &   &&&\\
10 & 1 & $0.0038$ & 25.76s & $0.0337$ & 8.26s&   &&&\\
10 & 2 & $0.0038$ & 74.82s & $0.0152$ & 11.18s&   &&&\\
\end{tabular}
\label{tab:simp}
\end{table}

With the new constraints, one can similarly check that the global minimum is still $0$.
Numerical results for solving this problem are presented in \cref{tab:ep:box-ne2},
where symbols and notation are similarly defined as in~\Cref{tab:simp}.
From the table,
one can see that without CS-LMEs,
{\tt CS-TSSOS} cannot find the global minimum with satisfying error in 61 seconds for the option {\tt TS="block"}, and in 78 seconds for the option {\tt TS="MD"}.
Besides that, {\tt Gloptipoly} got the lower bound $-2\cdot 10^{-5}$ in 0.30 second for $d=3$,
and got $-5\cdot 10^{-9}$ in 0.52 second for $d=4$.
For the CS-LME approach, we obtained an approximation $-9\cdot 10^{-7}$ for the global minimum in $1.69$ seconds.
\begin{table}[htb]
\small
\renewcommand{\arraystretch}{1}
\centering
\caption{Numerical results for~\Cref{ep:box-ne}(ii)}
\label{tab:ep:box-ne2}
\begin{tabular}{cc|cc|cc|cc}  \hline
\multirow{2}{*}{$d$} & \multirow{2}{*}{$l$} &  \multicolumn{2}{c|}{no LME+{\tt block}} &  \multicolumn{2}{c|}{no LME+{\tt MD}} & \multicolumn{2}{c}{CS-LME}\\ \cline{3-8}
& & error & time & error & time & error & time \\ \hline
3 & 1 & $0.0146$ & 0.02s & $0.0531$ & 0.01s  &  \multicolumn{2}{c}{not defined} \\ 
3 & 2 & $^*0.0140$ & 0.02s & $0.0480$ & 0.01s & & \\ \hline
4 & 1 & $0.0074$ & 0.04s & $0.0495$ & 0.04s  &  $0.0018$ & 0.41s \\ 
4 & 2 & $^*0.0070$ & 0.06s & $0.0450$ & 0.04s & $0.0016$ & 0.42s \\ \hline
5 & 1 & $0.0045$ & 0.15s & $0.0492$ & 0.14s  &  $2\cdot10^{-5}$ & 0.98s \\ 
5 & 2 & $^*0.0044$ & 0.28s & $0.0448$ & 0.16s & $\mathbf{9\cdot10^{-7}}$ & \highlight{1.69s} \\ \hline
10 & 1 & $0.0049$ & 6.45s & $0.0437$ & 13.82s  &   &  \\ 
10 & 2 & $^*0.0034$ & 61.41s& $0.0245$ & 12.99s &  &  \\
10 & 3 & 	 & 	& $0.0035$ & 59.52s &  &  \\ 
\vdots & \vdots & & & \vdots & \vdots & & \\  
10 & 8 & 	 & 	& $^*0.0034$ & 78.16s &  &  \\ 
\end{tabular}
\end{table}
\end{example}

For all remaining examples in this section,
symbols and notation in tables are similarly defined as in~\Cref{tab:simp},
and we shall not repeat explaining them,
for the neatness of this paper.

\begin{example}\label{ex:5balls}
{Consider the csp given in~\Cref{ex:cscser}.
For each $i=1\ddd 5$,
we let $f_i(x^{(i)})$ be the Choi-Lam's form
\[f_i(x^{(i)}) = (x^{(i)}_1x^{(i)}_2)^2+(x^{(i)}_1x^{(i)}_3)^2+(x^{(i)}_2x^{(i)}_3)^2+{x^{(i)}_4}^4-4x^{(i)}_1x^{(i)}_2x^{(i)}_3x^{(i)}_4,\]
and let
\[ g^{(i)} = (1-{x^{(i)}}^{\top}x^{(i)}),\quad  h^{(i)}=\emptyset.\]
Again, by the inequality of arithmetic and geometric means,
all $f_i$ are nonnegative, and $f_i(x^{(i)})=0$ when $x^{(i)}_1=\dots=x^{(i)}_4$.
Thus we know the optimal value for minimizing $f_1(x^{(1)})+\dots+f_5(x^{(5)})$ over the set given by $g^{(i)}(x^{(i)})\ge0$ for all $i=1\ddd 5$ is $0$.
For this problem,
the CS-LMEs can be given as 
\[\lmd^{(i)}=-\frac{{x^{(i)}}^{\top} F^{(i)} }{2}.\]
However, there do not exist LMEs, which can be similarly shown as in~\Cref{ep:ball}.
Numerical results of solving this problem using CS-TSSOS directly, the LME approach, and the CS-LME approach are presented in \Cref{tab:5balls}.

From the table, one can see that without CS-LMEs,
{\tt CS-TSSOS} cannot find the global minimum with the option {\tt TS="MD"} (interestingly, it returned the same lower bound $-0.1709$ for all $d=2\ddd 15$),
and cannot get an approximation for the global minimum with an error less than $0.0001$ in 6807 seconds with {\tt TS="block"}.
Moreover, {\tt Gloptipoly 3} obtained the lower bound $-0.1709$ when $d=2$ using 0.99 second,
and obtained the lower bound $-0.0135$ in 346.42 seconds when $d=3$.
In contrast, the CS-LME approach took 11.35 seconds to obtain an approximated minimum with an error equal to $6\cdot 10^{-6}$,
and took 107.62 seconds to obtain an approximated minimum with an error equal to $3\cdot 10^{-8}$.

\begin{table}[ht!]
\small
\renewcommand{\arraystretch}{1}
\centering
\caption{Numerical results for~\Cref{ex:5balls}}
\begin{tabular}{cc|cc|cc|cc}  \hline
\multirow{2}{*}{$d$} & \multirow{2}{*}{$l$} &  \multicolumn{2}{c|}{no LME+{\tt block}} &  \multicolumn{2}{c|}{no LME+{\tt MD}} & \multicolumn{2}{c}{CS-LME}\\ \cline{3-8}
& & error & time & error & time & error & time \\ \hline
2 & 1  & $^*0.0531$ & 0.01s & $^*0.1709$ & 0.01s &  \multicolumn{2}{c}{not defined} \\ \hline
3 & 1  & $^*0.0480$ & 0.01s & $^*0.1709$ & 0.02s & $0.0080$ & 5.82s \\ \hline
4 & 1  & $^*0.0495$ & 0.04s & $^*0.1709$ & 0.05s & $1\cdot 10^{-5}$ & 6.55s \\ 
4 & 2  & 	 & 	 & 	 & 	 & $6\cdot 10^{-6}$ & 11.35s \\ \hline
5 & 1  & $^*0.0450$ & 0.04s & $^*0.1709$ & 0.16s & $\mathbf{3\cdot10^{-8}}$ & {\bf 107.62s} \\ 
\vdots & \vdots & \vdots & \vdots & \vdots & \vdots & & \\
15 & 1  & $^*0.0001$ & 6807.45s & $^*0.1709$ & 554.09s & &  \\  
\end{tabular}
\label{tab:5balls}
\end{table}
}
\end{example}
\begin{example}\label{ex:box10}
Consider the box-constrained problem in \Cref{ep:2-block}.
Let $n_1=n_2=10,\ k=2$, and let $(i=1,2)$
\[f_i(x^{(i)})=\left(\sum\nolimits_{j=1}^{10}x^{(i)}_j+1\right)^2-4\left(\sum\nolimits_{j=1}^{
9}x^{(i)}_jx^{(i)}_{j+1}+x^{(i)}_1+x^{(i)}_{10} \right).\]
The LMEs and CS-LMEs can be similarly given by (\ref{eq:boxlme}) and (\ref{eq:box-cslme}), respectively.
One may check that the archimedean condition is not satisfied by $\IQ_{\cI_1}(h^{(1)},g^{(1)})$.
Furthermore, for $d=2\ddd 3$, the structure of SDPs obtained by the dense relaxation, CS-SOS relaxations, the LME approach, and the CS-LME approach are given in \Cref{tab:call1010}.

The minimum for this problem is achieved at the KKT point $(1,0,\dots,0,1)$, which equals $0$ (see also \cite{nie2019tight}).
This can also be numerically certified by {\tt Gloptipoly 3} via the {\it flat truncation} \cite{nie2013certifying}.
Indeed, {\tt Gloptipoly 3} got an approximation to the global minimum $-3\cdot10^{-8}$ in 26.25 seconds.
Numerical results of solving this problem using CS-TSSOS directly, the LME approach and the CS-LME approach are presented in \Cref{tab:box10}. From the table, one can see that without LMEs,
{\tt CS-TSSOS} could not find  an approximation for the global minimum with a desired accuracy when ${\tt TS=``MD"}$ within 11.51 seconds,
and took 36.67 seconds to get the minimum when ${\tt TS=``block"}$.
The LME approach took 15.79 seconds to get the approximation with the desired accuracy.
In contrast, the CS-LME approach only took 2.46 seconds to get an approximated global minimum with the error equal to $4\cdot 10^{-7}$.
\begin{table}[ht!]
\small
\centering
\caption{Numerical results for~\Cref{ex:box10}}
\begin{tabular}{cc|cc|cc|cc|cc}  \hline
\multirow{2}{*}{$d$} & \multirow{2}{*}{$l$}  & \multicolumn{2}{c|}{no LME+{\tt block}} & \multicolumn{2}{c|}{no LME+{\tt MD}} & \multicolumn{2}{c|}{LME} & \multicolumn{2}{c}{CS-LME}\\ \cline{3-10}
& & error & time &  error & time &  error & time &  error & time \\ \hline
2 & 1 & $^*0.0067$ & 1.08s & $0.0739$ & 0.10s & $^*1\cdot 10^{-7}$ & 15.79s & $\mathbf{^*4\cdot10^{-7}}$ & \highlight{2.46s}\\\hline
3 & 1 & $9\cdot 10^{-9}$ & 36.67s & $^*0.0558$ & 0.78s &  &  &  & \\\hline
4 & 1 &  &  & $^*0.0105$ & 11.51s  &  &  &  & \\
\end{tabular}
\label{tab:box10}
\end{table}
\end{example}

\begin{example}
\label{ep:2blocks}
Let $s=2$ and
\[\cI_1=\{1,2,3,7\},\quad \cI_2=\{4,5,6,7\}.\]
Consider the polynomial optimization problem (\ref{eq:cs-polyopt}) with csp $\{\cI_1, \cI_2\}$,
where 
\[\begin{array}{c} \displaystyle
f_1(x^{(1)}) = x_1^4x_2^2+x_2^4x_3^2+x_3^4x_1^2-3(x_1x_2x_3)^2+x_2^2 + x_7^2(x_1^2+x_2^2+x_3^2),\\
f_2(x^{(2)}) = x_4x_5(10-x_6)+x_7^2(x_4+2x_5+3x_6);\\
g^{(1)}_1(x^{(1)}) = x_1-x_2x_3,\enspace g^{(1)}_2(x^{(1)}) = -x_2+x_3^2,\\
g^{(2)}_1(x^{(2)}) = 1-x_4-x_5-x_6,\enspace g^{(2)}_2(x^{(2)}) = x_4,\enspace g^{(2)}_3(x^{(2)}) = x_5,\enspace g^{(2)}_4(x^{(2)}) = x_6.
\end{array}\]
{Since $x_1^4x_2^2+x_2^4x_3^2+x_3^4x_1^2\ge 3(x_1x_2x_3)^2$ by the inequality of arithmetic and geometric means,
we have $f_1(x^{(1)})\ge0$ with the equality holds when $x_1=x_2=x_3=x_7=0$.
On the other hand, $f_2$ is nonnegative on the feasible set given by $g^{(2)}(x^{(2)})\ge0$, and $f_2(x^{(2)})=0$ when $x_4x_5=0$ and $x_7=0$.
So, the global minimum for this problem is $0$, which is attain at $(0,0,0,t,0,0,0)$ and $(0,0,0,0,t,0,0)$ for all $t\in[0,1]$.
Also, one may check that this problem has an unbounded feasible set.}
For this problem, let
\[F^{(1)}=\nabla_1 f_1+\nu_{2,1,7}\ei_4,\quad 
F^{(2)}=\nabla_2 f_2+\nu_{2,1,7}\ei_4,\]
then the CS-LMEs are
\[\begin{array}{c} \displaystyle
\lambda^{(1)}_1 = F^{(1)}_{1},\quad 
\lambda^{(1)}_2 = [-x_3, -1, 0, 0]\cdot F^{(1)},\\
\lambda^{(2)}_1= - x_{4:6}^{\top} F^{(2)}_{1:3},\quad 
\lambda^{(2)}_2= F^{(2)}_{1}+\lambda^{(2)}_1,\quad 
\lambda^{(2)}_3= F^{(2)}_{2}+\lambda^{(2)}_1,\quad 
\lambda^{(2)}_4= F^{(2)}_{3}+\lambda^{(2)}_1.
\end{array}\]

The numerical results for solving this problem are presented in~\Cref{tab:2blocks}.
From the table, one can see that when there were no LMEs exploited,
{\tt CS-TSSOS} could not get an approximation for the global minimum of this problem with an error less than $0.0001$ within 271.95 seconds,
while the original LME approach took around 84.13 seconds to get the approximated value with an error equaling $2\cdot10^{-7}$.
{Moreover, when $d=3$ and $4$, {\tt Gloptipoly 3} failed to solve the problem (unboundedness suspected),
and it took 2264 seconds to get the lower bound $-120.82$ when $d=5$.}
In contrast, the CS-LME approach obtained an approximated minimum whose error was $9\cdot10^{-8}$ in $18.54$ seconds.
\begin{table}[htb]
\small
\renewcommand{\arraystretch}{1}
\centering
 \caption{Numerical results for~\Cref{ep:2blocks}}
\begin{tabular}{cc|cc|cc|cc|cc}  \hline
\multirow{2}{*}{$d$} & \multirow{2}{*}{$l$} &  \multicolumn{2}{c|}{no LME+{\tt block}} &  \multicolumn{2}{c|}{no LME+{\tt MD}} & \multicolumn{2}{c|}{LME} & \multicolumn{2}{c}{CS-LME}\\ \cline{3-10}
&   & error & time & error & time & error & time & error & time\\ \hline
3 & 1 & \multicolumn{2}{c|}{fail to solve} & \multicolumn{2}{c|}{fail to solve} & \multicolumn{2}{c|}{not defined} & \multicolumn{2}{c}{not defined} \\ 
3 & 2 & $^*>10^{8}$ & 0.18s & \multicolumn{2}{c|}{fail to solve} & \multicolumn{2}{c|}{not defined} & \multicolumn{2}{c}{not defined} \\ 
\vdots & \vdots & & & \vdots & \vdots & \vdots & \vdots & \vdots & \vdots  \\
3 & 5 & & & \multicolumn{2}{c|}{$^*$fail to solve} & \multicolumn{2}{c|}{not defined} & \multicolumn{2}{c}{not defined} \\ \hline
4 & 1 & $>10^{7}$ & 0.47s & \multicolumn{2}{c|}{fail to solve}  &  $1519.49$ & 4.95s & $645.71$ & 0.77s\\
4 & 2 & $^*>10^{5}$ & 0.59s & $>10^6$ & 0.45s  & $35.36$ & 5.28s & $23.62$ & 0.94s\\ 
\vdots & \vdots & \vdots & \vdots & \vdots & \vdots & \vdots & \vdots & \vdots & \vdots  \\
5 & 2 & $^*265.61$ & 2.60s &  $>10^5$         & 1.44s & $2\cdot 10^{-7}$ & 84.13s & $0.0324$ & 5.42s \\ 
\vdots & \vdots & \vdots & \vdots & \vdots & \vdots &  &  & \vdots & \vdots  \\
6 & 1 & $18.19$ & 6.28 & $102.78$         & 5.59s & & & $\mathbf{9\cdot 10^{-8}}$ & \highlight{18.54s} \\
\vdots & \vdots & \vdots  & \vdots & \vdots  & \vdots && & \multicolumn{2}{c}{ }  \\
8 & 2 & $^*0.0001$ & 224.77s & $0.0079$ & 75.64s & && \\ 
\vdots & \vdots &    &  & \vdots  & \vdots && & \multicolumn{2}{c}{ }  \\
8 & 5 & & & $^*0.0002$ & 322.68s & && \\
\end{tabular}
\label{tab:2blocks}
\end{table}
\end{example}

\begin{example}
\label{ep:nonaive}
Let $s=5$ and
\[\begin{array}{c}
\cI_1=\{1,2,3,4,17,18,19\},\ 
\cI_2=\{5,6,7,8,18,19,20\},\ \\
\cI_3=\{9,10,18,19,20\},\ 
\cI_4=\{11,12,17,18\},\ 
\cI_5=\{13,14,15,16,17\}.\end{array}\]
Consider the polynomial optimization problem (\ref{eq:cs-polyopt}) with csp $(\cI_1,\cI_2\ddd \cI_5)$, where
\[\begin{aligned}
f_1(x^{(1)})\ =&\ (x_1-x_{17})^2+(x_2-x_{18})^2+(x_3-x_{19})^2+x_4^2x_{17},\\ 
f_2(x^{(2)})\ =&\ x_{18}^2+x_{19}^2+x_{20}^2-x_5(x_6+x_7+x_8),\\
f_3(x^{(3)})\ =&\ x_9x_{10}(20-x_{18}-x_{19}-x_{20}),\\ 
f_4(x^{(4)})\ =&\ (x_{11}-x_{17})^2+(x_{12}+x_{18}-1)^2,\\
f_5(x^{(5)})\ =&\ (x_{17}-x_{13}+x_{14})^2+x_{15}x_{16},\\
g^{(1)}(x^{(1)})\ =&\ \left({\bf1}_7-x^{(1)},\ x^{(1)}_{1:4}+{\bf1}_7\right),\\ 
g^{(2)}(x^{(2)})\ =&\ \left(3-x^{(2)}_1-2\sum\nolimits_{j=2}^4x^{(2)}_j-\sum\nolimits_{j=5}^{7}x^{(2)}_j,\ x^{(2)}_1,\dots,\ x^{(2)}_7 \right),\\
g^{(3)}(x^{(3)})\ =&\ \left(1-\sum\nolimits_{j=1}^5x^{(3)}_j,\ x^{(3)}_{1},\ x^{(3)}_{2} \right),\\ 
g^{(4)}(x^{(4)})\ =&\ 1-{x^{(4)}}^{\top}x^{(4)},\quad 
g^{(5)}(x^{(5)})\ =\ x^{(5)}.
\end{aligned}\]
It is clear that except $f^{(2)}$, all other $f^{(i)}$ are nonnegative over the set given by $(g^{(1)},g^{(2)}\ddd g^{(5)})$.
For $f^{(2)}$, its minimum $-\frac{9}{8}$ is attained at the KKT point $x^{(2)}=\left(\frac{3}{4},0,0,\frac{3}{2},0,0,0\right)$.
Indeed, one may check that the global minimum for this problem is $-\frac{9}{8}$.
For this problem, the set of edges is \[{A}=\{(2,1),\ (3,2),\ (4,1),\ (5,4)\}.\]
The auxiliary variables are
\[\nu_{2,1,18},\ \nu_{2,1,19},\ \nu_{3,2,18},\ \nu_{3,2,19},\ \nu_{3,2,20},\ \nu_{4,1,17},\ \nu_{4,1,18},\ \nu_{5,4,17}.\]
If we let $F^{(i)}$ be given as in (\ref{eq:Fi}),
then CS-LMEs are
\[\begin{array}{c}\displaystyle
\lmd^{(1)}_{1:4} = -\frac{1}{2}\cdot F^{(1)}_{1:4}\circ (\mathbf{1} + x_{1:4}),\quad
\lmd^{(1)}_{5:7} = - F^{(1)}_{5:7},\quad
\lmd^{(1)}_{8:11}= F^{(1)}_{1:4}+\lmd^{(1)}_{1:4};\\
\lmd^{(2)}_1= -\frac{1}{3} {F^{(2)}}^{\top} x^{(2)}, \quad
\lmd^{(2)}_{2:4} = 2\lmd^{(2)}_1+F^{(2)}_{1:3}, \quad
\lmd^{(2)}_{5:8} = 2\lmd^{(2)}_1+F^{(2)}_{4:7};\\
\lmd^{(3)}_1= - {F^{(3)}}^{\top} x^{(3)}, \quad
\lmd^{(3)}_{2:3} = \lmd^{(3)}_1+F^{(3)}_{1:2};\quad \lmd^{(4)}= -\frac{1}{2} {F^{(4)}}^{\top} x^{(4)}; \quad
\lmd^{(5)} = F^{(5)}.\end{array}\]
We would like to remark that the tuple $(g^{(1)},g^{(2)}\ddd g^{(5)})$ is singular,
so original LMEs do not exist.
The numerical results for solving this problem are presented in~\Cref{tab:nonaive}.
From the table, one can see that when there were no LMEs exploited,
{\tt CS-TSSOS} could not get an approximation for the global minimum with an error less than $0.001$ in 7697.33 seconds.
{Moreover, {\tt Gloptipoly 3} suspected unboundedness when $d=3$,
and the $4$th order dense relaxation cannot be solved due to the memory limit.}
In contrast, the CS-LME approach obtained an approximated minimum whose error was $1\cdot10^{-7}$ in $53.73$ seconds.
\begin{table}[htb]
\small
\renewcommand{\arraystretch}{1}
\centering
\caption{Numerical results for~\Cref{ep:nonaive}}
\begin{tabular}{cc|cc|cc|cc}  \hline
\multirow{2}{*}{$d$} & \multirow{2}{*}{$l$} &  \multicolumn{2}{c|}{no LME+{\tt block}} &  \multicolumn{2}{c|}{no LME+{\tt MD}} & \multicolumn{2}{c}{CS-LME}\\ \cline{3-8}
& & error & time & error & time & error & time \\ \hline
2 & 1 & \multicolumn{2}{c|}{fail to solve} & $^*>10^{6}$ & 0.28s & $9.5731$ & 1.32s \\ 
2 & 2 & $^*>10^6$ & $0.37 $ & \multicolumn{2}{c|}{ } & $0.3085$ & 1.50s \\  
\vdots & \vdots & \multicolumn{2}{c|}{ } & & & \vdots & \vdots  \\ 
2 & 5 & \multicolumn{2}{c|}{ } & \multicolumn{2}{c|}{ } & $^*0.1417$ & 10.05s \\ \hline 
3 & 1 & $1.6047$ & 3.59s & $1295.25$ & 0.71s  &  $\mathbf{4\cdot10^{-7}}$ & \highlight{60.41s} \\ 
3 & 2 &\multicolumn{2}{c|}{$^*$fail to solve}& $1276.92$ & 0.76s &  & \\
\vdots & \vdots & \vdots & \vdots & \vdots & \vdots &  \\
5 & 2 & $^*0.0069$ & 16663.91s & $9.3531$ & 252.95s &  \\
5 & 3 & \multicolumn{2}{c|}{ } & $0.0862$ & 7697.33s &  \\
\end{tabular}
\label{tab:nonaive}
\end{table}

\end{example}

{For the following two examples,
we do not run {\tt Gloptipoly 3} for solving them,
since the problem scales are too large for dense SOS relaxations.}
\begin{example}
\label{ep:uncons}
Consider the correlative sparsity pattern given in~\Cref{ep:cspep}.
Let $s=10$, $N=15$, and $k=2$.
For each $i\in [10]$,
let 
\[\begin{array}{c}
f_i(x)=\left({x^{(i)}}^Tx^{(i)}\right)^2-4\left((x^{(i)}_1x^{(i)}_2)^2+\dots+(x^{(i)}_4x^{(i)}_5)^2 +(x^{(i)}_5x^{(i)}_1)^2\right)\\
+\left( x^{(i)}_1+\dots+x^{(i)}_5-(x^{(i)}_{6:10})^{\top}x^{(i)}_{11:15} \right)^2.
\end{array}\]
Consider the unconstrained polynomial optimization problem
\begin{equation}\label{eq:uncons}
\begin{array}{lll}
&\min\limits_{x} & f_1(x^{(1)})+\dots+f_{10}(x^{(10)}).
\end{array}
\end{equation}
{For each $i\in[10]$,
the $\left({x^{(i)}}^Tx^{(i)}\right)^2-4\left((x^{(i)}_1x^{(i)}_2)^2+\dots+(x^{(i)}_4x^{(i)}_5)^2 +(x^{(i)}_5x^{(i)}_1)^2\right)$ is the {\it Horn's form} \cite{reznick2000some},
which is a nonnegative homogeneous polynomial.
Thus the global minimum of \eqref{eq:uncons} is $0$.}
For unconstrained problems, the system 
\[\phi^{(i)}(x^{(i)},\nu^{(i)})=0,\quad
\psi^{(i)}(x^{(i)},\nu^{(i)})\ge0,\quad \forall i\in[10]\]
reduces to 
\[
F^{(1)}(x^{(1)},\nu^{(1)}) = F^{(2)}(x^{(2)},\nu^{(2)}) = \cdots= F^{(10)}(x^{(10)},\nu^{(10)})=0,
\]
where every $F^{(i)}$ is given in (\ref{eq:Fi}) with auxiliary variables given in (\ref{eq:chainnu}).
Thus, the CS-LME typed reformulation (\ref{eq:pblmesparse}) becomes
\begin{equation}\label{eq:cs-uncons}
\left.\begin{array}{lll}
&\displaystyle \min ~~~ &f_1(x^{(1)})+\dots+f_{10}(x^{(10)})\\
&\enspace \mathrm{s.t.} \quad  &
F^{(1)}(x^{(1)},\nu^{(1)}) = F^{(2)}(x^{(2)},\nu^{(2)}) = \cdots= F^{(10)}(x^{(10)},\nu^{(10)})=0
\end{array}\right.
\end{equation}
Similarly, the original LME reformulation (\ref{eq:pblmesparse}) for (\ref{ep:ls}) becomes \begin{equation}\label{eq:lme-uncons}
\left.\begin{array}{lll}
&\displaystyle \min ~~~ &f_1(x^{(1)})+\dots+f_{10}(x^{(10)})\\
&\enspace \mathrm{s.t.} \quad  &
\nabla (f_1+f_2+\dots+f_{10})(x)=0
\end{array}\right.
\end{equation}

The numerical results for solving this problem are presented in~\Cref{tab:uncons}.
From the table, one can see that when there were no LMEs exploited,
{\tt CS-TSSOS} could not get a sensible approximation for the global minimum of this problem within 487.31 seconds,
while the original LME approach took around 270.40 seconds to get an approximated global minimum.
In contrast, the CS-LME approach obtained an approximated minimum whose error was $7\cdot10^{-10}$ in $20.48$ seconds.
\begin{table}[htb]
\small
\setlength{\tabcolsep}{5.5pt}
\renewcommand{\arraystretch}{1}
\centering
\caption{Numerical results for~\Cref{ep:uncons}}
\begin{tabular}{cc|cc|cc|cc|cc}  \hline
\multirow{2}{*}{$d$} & \multirow{2}{*}{$l$} &  \multicolumn{2}{c|}{no LME+{\tt block}} &  \multicolumn{2}{c|}{no LME+{\tt MD}} & \multicolumn{2}{c|}{LME} & \multicolumn{2}{c}{CS-LME}\\ \cline{3-10}
& & error & time & error & time & error & time & error & time \\ \hline
2 & 1 & \multicolumn{2}{c|}{$^*$fail to solve} & \multicolumn{2}{c|}{$^*$fail to solve} & \multicolumn{2}{c|}{fail to solve} & \multicolumn{2}{c}{fail to solve} \\ 
2 & 2 & \multicolumn{2}{c|}{ } & \multicolumn{2}{c|}{ } & \multicolumn{2}{c|}{$^*$fail to solve} & \multicolumn{2}{c}{$^*$fail to solve} \\ \hline
3 & 1 &  $^*>10^8$ & 78.43s  & $^*>10^8$ & 7.26s & $6\cdot 10^{-11}$ & 270.40s & $\mathbf{7\cdot 10^{-10}}$ & \highlight{20.48s} \\
4 & 1 &  \multicolumn{2}{c|}{$^*$out of memory} & $^*>10^6$ & 487.31s&  &  &  &  \\    
5 & 1 & & & \multicolumn{2}{c|}{$^*$out of memory}  && \\
\end{tabular}
\label{tab:uncons}
\end{table}
\end{example}

\begin{example}
\label{ep:ls}
In this example, we present numerical results by varying the number of blocks $s$.
For each $i\in [s]$, let $\mc{I}_i:=\{9i-8, 9i-7,\dots, 9i+1\}$.
Consider the following optimization problem
\begin{equation}\label{eq:ls} \left\{
\begin{array}{lll}
&\min\limits_{x} & f_1(x^{(1)})+f_2(x^{(2)})+\dots+f_s(x^{(s)})\\
&\st&  x^{(1)}_1\ge0,\ x^{(i)}_1+x^{(i)}_2+\dots+x^{(i)}_{10}\le 1,\ x^{(i)}_{2:10}\ge0,\enspace i\in [s]\\
\end{array} \right.
\end{equation}
In the above, 
\[f_i(x^{(i)})=\sum\nolimits_{j=1}^3x^{(i)}_{2j}x^{(i)}_{2j+1}+\left(\sum\nolimits_{j=7}^9(x^{(i)}_j)^3-3x^{(i)}_7x^{(i)}_8x^{(i)}_9\right)x^{(i)}_{10},\enspace i\in[s].\]
Since all variables are nonnegative,
by the inequality of arithmetic and geometric means,
each $f_i(x^{(i)})$ is nonnegative and reaches $0$ at $x^{(i)}={\bf 0}$,
and it is clear that (\ref{eq:ls}) has the csp $(\cI_1\ddd \cI_s)$ and its minimum value equals $0$.
Moreover, because for all $s\ge2$,
the matrix $\bG(x)$ given as in \eqref{eq:gsedf} does not have full column rank at $\ei_{10}$.
So \eqref{eq:ls} does not have LMEs.
For each $i\in[s-1]$,
we have the auxiliary variable $\nu_{i+1,i,9i+1}$.
Let $F^{(i)}$ be given in (\ref{eq:Fi}),
then CS-LMEs are
\[\begin{array}{c}\displaystyle
\lambda_1^{(1)} = -{F^{(1)}}^{\top}x^{(1)},\quad \lambda_{2:11}^{(1)}=F^{(1)}+\lambda_1^{(1)}; \\\lambda_1^{(i)} = -{F^{(i)}}^{\top}x^{(i)},\quad \lambda_{2:10}^{(i)}=F^{(i)}_{2:10}+\lambda_1^{(i)},\quad (i=2\ddd s). \end{array} \]

The numerical results for solving this problem with $s=2\ddd 7$ are presented in~\Cref{tab:ls}.
In the table, ``$s$'' represents the quantity $s$ in (\ref{eq:ls}),
and all other symbols and notations are similarly defined as in~\Cref{tab:simp} (see~\Cref{ep:box-ne}).
When $s=2$, one can see that when there were no CS-LMEs exploited,
{\tt CS-TSSOS} could not get an approximation for the global minimum of this problem with an error less than $0.01$ in 11366.94 seconds.
In contrast, the CS-LME approach obtained an approximated minimum whose error was $5\cdot10^{-9}$ in $1192.89$ seconds.
Moreover, when $s=3\ddd 7$, we do not present numerical results with relaxation order $d=3$ since we cannot get lower bounds that are close to $0$.
Also, results of approaches without CS-LMEs are not presented for $s\ge3$ and $d=4$,
because close lower bounds cannot be computed by these approaches with reasonable time consumption.
\begin{table}[htb]
\small
\renewcommand{\arraystretch}{1}
\centering
\caption{Numerical results for~\Cref{ep:ls}}
\begin{tabular}{c|cc|cc|cc|cc}  \hline
\multirow{2}{*}{$s$}& \multirow{2}{*}{$d$} & \multirow{2}{*}{$l$} &  \multicolumn{2}{c|}{no LME+{\tt block}} &  \multicolumn{2}{c|}{no LME+{\tt MD}} & \multicolumn{2}{c}{CS-LME}\\ \cline{4-9}
& & & error & time & error & time & error & time \\ \hline
\multirow{6}{*}{2} 
&  3 & 1 & $0.0735$ & 71.54s & $5369.40$ & 2.88s & $3.6067$ & 16.85s \\
&  3 & 2 & $^*0.0230$ & 196.25s & $ 624.22$ & 4.25s & $0.0680$& 35.02s \\
&  3 & 3 & & & $ 0.0238$ & 78.46s  & $0.0091$ & 353.71s \\
&  3 & 4 & & & $^*0.0230$ & 216.41s & $0.0071$ & 834.23s \\\cline{2-9}
&  4 & 1 & $0.0205$ & 11366.94s & $23.77$ & 682.21 & $\mathbf{5\cdot 10^{-9}}$ & \highlight{1192.89s} \\
&  4 & 2 & & & $0.0104$ & 71235.47 & \multicolumn{2}{c}{-} \\  
\hline
\multirow{1}{*}{3}
&  4 & 1 & &&&& $\mathbf{7\cdot 10^{-8}}$ & \highlight{1965.19s} \\
\hline
\multirow{1}{*}{4}
&  4 & 1 & &&&& $\mathbf{2\cdot 10^{-7}}$ & \highlight{2432.45s} \\ 
\hline
\multirow{1}{*}{5}
&  4 & 1 & &&&& $\mathbf{3\cdot 10^{-7}}$ & \highlight{2868.47s} \\
\hline
\multirow{1}{*}{6}
&  4 & 1 & &&&& $\mathbf{3\cdot 10^{-7}}$ & \highlight{4136.36s} \\
\hline
\multirow{1}{*}{7}
&  4 & 1 & &&&& $\mathbf{3\cdot 10^{-7}}$ & \highlight{4567.80s} \\ 
\end{tabular}
\label{tab:ls}
\end{table}
\end{example}

\section{Conclusions and discussions}
\label{sc:dis}
 We consider correlatively sparse polynomial optimization problems.
We introduce CS-LMEs to construct CS-LME reformations for polynomial optimization problems.
Under some general assumptions, we show that correlative SOS relaxations can get tighter lower bounds when solving the CS-LME reformulation instead of the original optimization problem.
Moreover, asymptotic convergence is guaranteed if the sequel of CS-SOS relaxations for the original polynomial optimization is convergent.
Numerical examples are presented to show the superiority of our new approach.

For future work, one wonders if the CS-SOS relaxation has finite convergence for solving CS-LME reformulations. Indeed, finite convergence for the original LME reformulation in \cite{nie2019tight} is guaranteed under mild conditions.
As demonstrated in Section~\ref{sc:ne}, the CS-LME approach usually finds the global minimum (up to a negligible numerical error) for polynomial optimization problems with a low relaxation order.
However, it is still open that if the finite convergence is guaranteed theoretically or not, even for generic cases.
Moreover, when the correlatively sparse polynomial optimization~\eqref{eq:cs-polyopt} is given by generic polynomials,
its KKT ideal is zero-dimensional. Thus the real variety given by equality constraints in~\eqref{eq:pblmesparse} is a finite set.
For the classical Moment-SOS relaxations,
finite convergence is theoretically guaranteed when equality constraints of the polynomial optimization give a zero-dimensional real variety,
as shown in \cite{NieReal}.
So, it is interesting to ask whether the analogous is true for CS-SOS relaxations. 
{Besides that, our numerical experiments indicate that the CS-LME approach can usually find the global minimum for polynomial optimization problems even if some $\IQ_{\cI^{(i)}} (h^{(i)},g^{(i)})$ is not archimedean. 
Therefore, an interesting question is whether the CS-LME approach has guaranteed asymptotic or finite convergence without the archimedean condition for every $\IQ_{\cI^{(i)}} (h^{(i)},g^{(i)})$.}

At last, we would like to remark that LMEs have broad applications in many polynomial-defined problems.
Therefore, a natural question is how to apply CS-LMEs to these applications.
For example, when a saddle point problem is given by polynomials with correlative sparsity, can we apply CS-LMEs to construct polynomial optimization reformulation similar to the one in \cite{nie2021saddle} for finding saddle points?

\appendix
\section{Computing LMEs and CS-LMEs}\label{sec:appLME}
{We introduce how to find LMEs and CS-LMEs for practical implementation.
As mentioned in \Cref{sc:preLMEs} and \Cref{sc:CSLME}, finding LMEs (resp., CS-LMEs) is equivalent to finding matrices of polynomials $L(x),\, D(x)$ (resp., $L^{(i)}(x),\, D^{(i)}(x)$) such that~\eqref{a:LGI} (resp.,~\eqref{eq:cs-LGI}) holds.
Note that the matrices $\bG(x)$ and $\bG^{(i)}(x)$ only depend on constraints,
and LMEs can be viewed as special cases of CS-LMEs that there only exists one block, i.e., $s=1$.
Here we only introduce how to get CS-LMEs, and the methodology for finding LMEs is similar.

Suppose the matrix of polynomial $\bG^{(i)}(x^{(i)})$ has full column rank over $\mathbb{C}^{n_i}$.
In general, (\ref{eq:cs-LGI}) gives a linear equation system.
Denote $\hm_i:=m_i+\ell_i$, and
\[L^{(i)}(x^{(i)}):=\left[\begin{array}{cccc}
L_{1,1}(x^{(i)}) & L_{1,2}(x^{(i)}) & \dots & L_{1,n_i}(x^{(i)}) \\
\vdots & \vdots & \vdots & \vdots  \\
L_{\hm_i,1}(x^{(i)}) & L_{\hm_i,2}(x^{(i)}) & \dots & L_{\hm_i,n_i}(x^{(i)})
\end{array}\right],
\]
\[D^{(i)}(x^{(i)}):=\left[\begin{array}{cccc}
D_{1,1}(x^{(i)}) & D_{1,2}(x^{(i)}) & \dots & D_{1,\hm_i}(x^{(i)}) \\
\vdots & \vdots & \vdots & \vdots  \\
D_{\hm_i,1}(x^{(i)}) & D_{\hm_i,2}(x^{(i)}) & \dots & D_{\hm_i,\hm_i}(x^{(i)})
\end{array}\right].
\]
Suppose all entries in $L^{(i)}(x^{(i)})$ and $D^{(i)}(x^{(i)})$ are polynomials whose degrees are not greater than $d$.
For each $j,k$, let (here for the $\alpha=(\alpha_1\ddd \alpha_{n_i})\in\mathbb{N}^{n_i}_d$, we denote ${x^{(i)}}^{\alpha}:={x^{(i)}_1}^{\alpha_1}{x^{(i)}_2}^{\alpha_2}\dots{x^{(i)}_{n_i}}^{\alpha_{n_i}}$)
\be\label{eq:LD}
L_{j,k}(x^{(i)})=\sum_{\alpha\in\mathbb{N}^{n_i}_d}L_{j,k,\alpha}\cdot{x^{(i)}}^{\alpha},\quad
  D_{j,k}(x^{(i)})=\sum_{\alpha\in\mathbb{N}^{n_i}_d}D_{j,k,\alpha}\cdot{x^{(i)}}^{\alpha}.\ee
Then (\ref{eq:cs-LGI}) can be written as the following linear equation system in variables $L_{j,k,\alpha}$ and $D_{j,k,\alpha}$:
\be\label{eq:lme-les}
\begin{aligned}
&\sum_{l=1}^{n_i}\left(\sum_{\alpha\in\mathbb{N}^{n_i}_d}L_{j,l,\alpha}\cdot{x^{(i)}}^{\alpha}\right)\cdot\frac{\pt c^{(i)}_{k}}{\pt x^{(i)}_l}(x^{(i)})
+ \left(\sum_{\alpha\in\mathbb{N}^{n_i}_d}D_{j,k,\alpha}\cdot{x^{(i)}}^{\alpha}\right)c^{(i)}_k(x^{(i)})\\
=\ &\left\{\begin{array}{c}
1\quad \mbox{if}\quad j=k,\\
0\quad \mbox{if}\quad j\ne k,
\end{array}\right.
\qquad (j\in[\hm_i],\ k\in[\hm_i]).
\end{aligned}
\ee
We remark that in \eqref{eq:lme-les}, the equality means that the polynomials on both sides are identically equaled.
By \cite[Proposition~5.2]{nie2019tight},
since $\bG^{(i)}(x)$ has full column rank over $\mathbb{C}^{n_i}$,
the system \eqref{eq:lme-les} must have solutions when $d$ is large enough.
Therefore, for each $i\in[s]$,
we solve the linear system \eqref{eq:lme-les} for solutions with a given degree $d$.
If we get a solution to \eqref{eq:lme-les}, then we recover polynomial matrices $L^{(i)}(x^{(i)})$ and $D^{(i)}(x^{(i)})$ (hence CS-LMEs) using this solution;
otherwise, we let $d\leftarrow d+1$ and solve \eqref{eq:lme-les} with the updated degree $d$, until a solution is obtained.

Sometimes, one may get CS-LMEs without actually computing polynomial matrices $L^{(i)}(x^{(i)})$ and $D^{(i)}(x^{(i)})$.
Instead, CS-LMEs can be directly obtained using the ``multiplication-cancellation'' trick
\footnote{This trick was introduced by Professor Jiawang Nie in his research group discussions. It is also mentioned in Section 6.3 of his new book {\it Moment and Polynomial Optimization} \cite{nie2023moment}.}.
This is shown in the following example.
\begin{example}\label{ex:sector-multi-cancel}
Consider the case that 
\[ g^{(i)}(x^{(i)})=\left(1-{x^{(i)}}^{\top}x^{(i)},\ x^{(i)}_1\ddd x^{(i)}_{n_i}\right).\]
Then the KKT-typed system (\ref{eq:sub-KKT}) for the $i$th block implies that
\begin{align}
\label{eq:optcond}&F^{(i)}(z^{(i)}) = -2\lmd^{(i)}_1\cdot x^{(i)}+\sum\nolimits_{j=1}^{n_i}\lmd^{(i)}_{j+1}\cdot e_j,\\
&\label{eq:complem}\lmd^{(i)}_1 \perp 1-{x^{(i)}}^{\top}x^{(i)},\quad 
\lmd^{(i)}_{j+1}\perp x^{(i)}_j\ (j\in[n_i]).\end{align}
By multiplying ${x^{(i)}}^{\top}$ on both sides of (\ref{eq:optcond}),
we get
\[{x^{(i)}}^{\top}F^{(i)}(z^{(i)}) = -2\lmd^{(i)}_1\cdot {x^{(i)}}^{\top}x^{(i)}+\sum\nolimits_{j=1}^{n_i}\lmd^{(i)}_{j+1}\cdot x^{(i)}_j.\]
Note that (\ref{eq:complem}) implies that $\lmd^{(i)}_1\cdot {x^{(i)}}^{\top}x^{(i)}=\lmd^{(i)}_1$
and $\lmd^{(i)}_{j+1}\cdot x^{(i)}_j=0$.
So we further have
\[{x^{(i)}}^{\top}F^{(i)}(z^{(i)}) = -2\lmd^{(i)}_1.\]
Therefore, again by (\ref{eq:optcond}), we get CS-LMEs that
\[\lmd^{(i)}_1 = -{x^{(i)}}^{\top}F^{(i)}(z^{(i)})/2,\quad
\lmd^{(i)}_{j+1} = F^{(i)}_{j}(z^{(i)})+2\lmd^{(i)}_1 \cdot x^{(i)}_j\ (j\in[n_i]).\]
\end{example}
We remark that though we do not get explicit expressions for $L^{(i)}(x^{(i)})$ and $D^{(i)}(x^{(i)})$, essentially, this trick is equivalent to finding solutions for (\ref{eq:cs-LGI}).
For instance, the step of multiplying ${x^{(i)}}^{\top}$ on both sides of (\ref{eq:optcond}) means that the first row of $L^{(i)}(x^{(i)})$ is ${x^{(i)}}^{\top}$.
Besides that, for some commonly used constraints (e.g., box, ball, simplex, etc.), LMEs are explicitly given in \cite{nie2019tight},
and they can be similarly applied to the construction of CS-LMEs.}

\section*{Acknowledgments}
The authors would like to thank the editor and anonymous reviewers for all their valuable comments and suggestions, which led to an improvement of the manuscript. We also thank Jiawang Nie and Jie Wang for their inspiring and helpful comments. Zheng Qu was partially supported by NSFC Young Scientist Fund grant 12001458 and Hong Kong Research Grants Council General Research Fund grant 17317122.
Xindong Tang was partially supported by the Start-up Fund P0038976/BD7L from The Hong Kong Polytechnic University.

\bibliographystyle{siam}      
\bibliography{mybib}   

\end{document}